\Crefname{figure}{Figure}{Figures}
\renewcommand{\geq}{\geqslant}
\renewcommand{\leq}{\leqslant}
\renewcommand{\le}{\leq}
\renewcommand{\ge}{\geq}
\newcommand{\arxiv}[1]{\href{http://arxiv.org/abs/#1}{arXiv:#1}}
\DeclareMathOperator{\sign}{\mathsf{sign}}
\DeclareMathAlphabet{\mathbfcal}{OMS}{cmsy}{b}{n}
\DeclareMathAlphabet{\mathbbold}{U}{bbold}{m}{n}
\newcommand{\C}{\mathbb{C}}
\newcommand{\Z}{\mathbb{Z}}
\newcommand{\N}{\mathbb{N}} 
\newcommand{\R}{\mathbb{R}}    
\newcommand{\Q}{\mathbb{Q}}
\newcommand{\puiseux}{\C\{\!\{x\}\!\}}
\newcommand{\posN}{\mathbb{N}^{*}}
\newcommand{\trop}[1][]{\ifthenelse{\equal{#1}{}}{ \mathbb{T} }{ \mathbb{T}(#1) }}
\newcommand{\abs}[1]{|{#1}|}
\DeclareMathOperator*{\val}{\mathsf{val}}
\newcommand{\card}[1]{|{#1}|}
\newtheorem{question}{Question}
\newtheorem{theorem}{Theorem}[section]
\newtheorem{proposition}[theorem]{Proposition}
\newtheorem{conjecture}[theorem]{Conjecture}
\newtheorem{lemma}[theorem]{Lemma}
\theoremstyle{definition}
\newtheorem{definition}[theorem]{Definition}
\theoremstyle{remark}
\newtheorem{remark}[theorem]{Remark}
\newtheorem{example}[theorem]{Example}
\tikzset{grid/.style={gray!30,very thin}}
\tikzset{axis/.style={gray!50,->,>=stealth'}}
\tikzset{convex/.style={draw=none,fill=lightgray,fill opacity=0.7}}
\tikzset{convexborder/.style={very thick}}
\tikzset{point/.style={blue!50}}
\tikzset{hs/.style={fill opacity=0.3,fill=orange,draw=none}}
\tikzset{hsborder/.style={orange,ultra thick,dashdotted}}
\newcommand{\overbar}[1]{\mkern 1.5mu\overline{\mkern-1.5mu#1\mkern-1.5mu}\mkern 1.5mu}
\newcommand{\trF}{\overbar{F}}
\newcommand{\trG}{\overbar{G}}
\newcommand{\trH}{\overbar{H}}
\newcommand{\pser}{S}
\newcommand{\pserii}{T}
\newcommand{\vring}{\mathcal{O}}
\newcommand{\lring}{O}
\newcommand{\wron}{W}
\newcommand{\mint}{I}
\newcommand{\bratf}{\C(x,y)}
\newcommand{\powseq}{\mathcal{S}}
\newcommand{\seqit}{s}
\newcommand{\powcon}{\xi}
\newcommand{\psupport}{\Lambda}
\newcommand{\sym}{\mathrm{Sym}}
\newcommand{\rootpol}{R}
\newcommand{\auxrootpol}{\overbar{R}}
\newcommand{\wentpol}{P}
\newcommand{\wrpol}{Q}
\newcommand{\degy}{\delta}
\def\env@dmatrix{\hskip -\arraycolsep
  \let\@ifnextchar\new@ifnextchar
  \array{*\c@MaxMatrixCols{>{\displaystyle}c}}}
\newenvironment{bdmatrix}
  {\left[\env@dmatrix}
  {\endmatrix\right]}
\newcommand\vp{\ensuremath{\mathsf{VP}}}
\newcommand\vnp{\ensuremath{\mathsf{VNP}}}
\begin{document}

\title[Intersection multiplicity of a sparse and a low-degree curve]{Intersection multiplicity of a sparse curve and a low-degree curve}

\author[Pascal Koiran]{Pascal Koiran}
\author[Mateusz Skomra]{Mateusz Skomra}
\address{Univ Lyon, EnsL, UCBL, CNRS,  LIP, F-69342, LYON Cedex 07, France.}
  \email{firstname.lastname@ens-lyon.fr}

\begin{abstract}
  Let $F(x, y) \in \C[x,y]$ be a polynomial of degree $d$
  and let $G(x,y) \in \C[x,y]$  be a polynomial with $t$ monomials.
  We want to estimate the maximal multiplicity of a solution of
  the system $F(x,y) = G(x,y) = 0$. 
  Our main result is that the multiplicity of any isolated solution
  $(a,b) \in \C^2$ with nonzero coordinates is no greater than
  $\frac{5}{2}d^2t^2$. We ask whether this intersection multiplicity can be polynomially
  bounded in the number of monomials of $F$ and $G$, and we briefly review
  some connections between sparse polynomials and algebraic complexity theory.
\end{abstract}

\maketitle

\section{Introduction}

In this paper we consider the following problem. Let $F(x, y) \in \C[x,y]$ and $G(x,y) \in \C[x,y]$ be two polynomials with complex coefficients such that $F$ has degree $d \ge 1$ and $G$ has $t \ge 1$ monomials. We want to estimate the maximal multiplicity of an isolated solution of the system
\begin{equation}\label{eq:system}
F(x,y) = G(x,y) = 0 \, .
\end{equation}
Our main result is the following theorem.
\begin{theorem}\label{th:main}
  Suppose that $p \coloneqq (a, b) \in (\C \setminus \{0\})^{2}$ is an isolated solution of system~\cref{eq:system}. Then, the intersection multiplicity of $F(x,y)$ and $G(x,y)$ at $p$ is at most $\frac{5}{2}d^2t^2$.
\end{theorem}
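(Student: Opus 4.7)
The plan is to parametrize each local branch of $F=0$ at $p$ by a Puiseux series, reduce the intersection multiplicity to a sum of orders of vanishing of $G$ along these branches, and then bound each such order by a sparse Wronskian argument.

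In the complete local ring $\C[[x-a,y-b]]$, factor $F = u\prod_{j=1}^{k} F_j^{n_j}$ with $u$ a unit and each $F_j$ irreducible, of multiplicity $\mu_j := \mult_p F_j \geq 1$, parametrized by a Puiseux series $\gamma_j(s)=(X_j(s),Y_j(s))$ with $X_j(0)=a$, $Y_j(0)=b$. The standard intersection-multiplicity formula reads
\[
\mult_p(F,G) \;=\; \sum_{j=1}^{k} n_j\cdot\mathrm{ord}_{s=0}\, G(\gamma_j(s)),
\]
and $\sum_j n_j\mu_j = \mult_p F \leq d$ combined with $\mu_j\geq 1$ gives $\sum_j n_j\leq d$. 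It therefore suffices to prove $\mathrm{ord}_{s=0}\, G(\gamma(s)) \leq \tfrac{5}{2}\, d\, t^2$ for each single branch.

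Fix a branch $\gamma=(X,Y)$ and set $f_i(s):=X(s)^{a_i}Y(s)^{b_i}$ for $i=1,\ldots,t$. Since $a,b\neq 0$, every $f_i$ is a unit in $\C[[s]]$, so $g:=G(\gamma(s))=\sum_i c_if_i$ lies in the subspace $V:=\mathrm{span}_\C(f_1,\ldots,f_t)\subseteq\C[[s]]$, of dimension $r\leq t$. Let $0=k_1<k_2<\cdots<k_r$ be the jumps of the order filtration on $V$ and let $(e_j)_{j=1}^r$ be a basis with $\mathrm{ord}_{s=0}e_j=k_j$. A leading-term computation gives $\mathrm{ord}_{s=0} W(e_1,\ldots,e_r) = \sum_{j=1}^r k_j - \binom{r}{2}$, and combined with $k_j\geq j-1$ this yields
\[
\mathrm{ord}_{s=0}\, g \;\leq\; k_r \;\leq\; \mathrm{ord}_{s=0} W(e_1,\ldots,e_r) + (t-1).
\]
Since a change of basis multiplies the Wronskian by a nonzero scalar, the order is the same as that of $W(f_{i_1},\ldots,f_{i_r})$ for any maximal $\C$-linearly independent subset of the $f_i$, reducing the task to bounding this Wronskian's $s$-order.

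Using the iterated logarithmic-derivative identity $f_i^{(k)} = f_i\cdot P_k(a_i,b_i;s)$, where $P_k(a,b;\cdot)$ is a Bell-type polynomial of degree $k$ in $(a,b)$ with leading homogeneous part $(au+bv)^k$ for $u := X'/X$ and $v := Y'/Y$, the Wronskian factors as $\bigl(\prod_\ell f_{i_\ell}\bigr)\cdot\det\bigl[P_{\ell-1}(a_{i_m},b_{i_m};s)\bigr]_{\ell,m}$. The first factor is a unit, so the task reduces to bounding the $s$-order of the $r\times r$ determinant. Its leading-$(a,b)$ part is the Vandermonde-like product $\prod_{\ell<m}\bigl((a_{i_m}-a_{i_\ell})u + (b_{i_m}-b_{i_\ell})v\bigr)$, each of whose $\binom{r}{2}\leq\binom{t}{2}$ factors is the logarithmic derivative along $\gamma$ of a binomial $x^\alpha y^\beta - a^\alpha b^\beta$. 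The main obstacle is to establish a sharp $s$-order bound for this determinant: one must bound the intersection multiplicity of the branch with each binomial curve --- a problem that, via the torus structure and the degree-$d$ constraint on the branch, should admit a bound polynomial in $d$ and $\mu_j$ --- and to control the subleading $(a,b)$-corrections to the determinant that become relevant when the Vandermonde leading factor degenerates because the support of $G$ contains collinearities in the direction determined by the branch's tangent data. Once these estimates are assembled, the per-branch bound combined with $\sum_j n_j \leq d$ will yield the claimed $\tfrac{5}{2}\, d^2 t^2$, with the precise constant $\tfrac{5}{2}$ arising from the combinatorial accounting across the $\binom{t}{2}$ factors and the branch-weighted sum.
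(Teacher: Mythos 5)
Your outline correctly reduces to bounding the order of vanishing of $G$ along Puiseux branches of $F$ at $p$ and correctly identifies the Wronskian of the sparse monomials as the right object, but it stops exactly where the proof becomes nontrivial: no bound on $\mathrm{ord}_{s=0}$ of the Bell-polynomial determinant $\det\bigl[P_{\ell-1}(a_{i_m},b_{i_m};s)\bigr]$ is ever established, and this is the entire substance of the key lemma. Both obstacles you acknowledge are genuine. The Vandermonde leading part in $(a,b)$ does not control the $s$-order, because each factor $(a_{i_m}-a_{i_\ell})u+(b_{i_m}-b_{i_\ell})v$ is the logarithmic derivative along the branch of a binomial $x^{\alpha}y^{\beta}-a^{\alpha}b^{\beta}$ whose exponents $(\alpha,\beta)$ are differences of entries of $\supp(G)$ and hence unbounded; there is no degree control, so no direct B\'ezout estimate is available, and when the Vandermonde degenerates the $s$-order passes to the lower-order Bell corrections, which you leave unanalyzed. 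The assertion that ``via the torus structure and the degree-$d$ constraint on the branch'' one gets a bound polynomial in $d$ is precisely what needs proof and is not at all obvious: bounding the contact order of a degree-$d$ branch with a high-degree binomial at a point with nonzero coordinates is already a nontrivial special case of the very theorem under proof, so as stated the argument is circular.

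The paper sidesteps both obstacles by a different mechanism. Instead of passing to logarithmic derivatives, it invokes Lemma~\ref{le:root_deriv} to rewrite each $\diffp[j]{\pser}{x}(x)$ as $\bigl(\diffp{\trF}{y}(x,\pser(x))\bigr)^{1-2j}$ times a fixed integer polynomial $\rootpol_j$ of degree $\leq 2j-1$ in the partial derivatives of $\trF$. Substituting this into the Wronskian via Lemma~\ref{le:deriv_power}, one pulls out the unit $(a+x)^{A_1}(b+\pser(x))^{A_2}$ (this is where $a,b\neq 0$ enters) and the denominator $\bigl(\diffp{\trF}{y}\bigr)^{t(t-1)}$, and what remains is a single bivariate polynomial $\overbar{\wrpol}_{\psupport,F}(x,y)\in\C[x,y]$ of degree $\leq 2dt(t-1)$ that is the \emph{same} for every branch of the irreducible factor $F_k$. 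Applying B\'ezout to $F_k$ and $\overbar{\wrpol}_{\psupport,F}$ then bounds the \emph{sum} of the valuations over all Puiseux roots of $F_k$ by $2d^2t(t-1)$ in one stroke. Note that this is an aggregate bound, not a per-branch bound; your target estimate $\mathrm{ord}_{s=0}G(\gamma(s))\leq\tfrac52 dt^2$ for each individual branch is stronger than what the paper establishes, and there is no indication it can be achieved by this method --- B\'ezout naturally controls only the sum. Finally, your handling of linearly dependent $f_i$ by restricting to a maximal independent subset is fine in spirit, but the paper's Case~I reduction (subtracting a vanishing linear combination from $G$ to drop to $t-1$ monomials and induct) is what keeps the inductive bookkeeping clean; you would need something equivalent.
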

The assumption that $a$ and $b$ are nonzero is crucial, as shown by the following examples.

\begin{example}
Let $F(x,y) \coloneqq x - y$ and $G(x, y) \coloneqq x^{2n} - y^{n}$. Then, $(0,0)$ is a solution of~\cref{eq:system} and its multiplicity is equal to $n$. Similarly, let $F(x,y) \coloneqq x - 1$ and $G(x, y) \coloneqq y^{n} + x - 1$. Then, $(1,0)$ is a solution of~\cref{eq:system} and its multiplicity is equal to $n$. In \cref{th:main} the restriction to points $p$ with nonzero coordinates 
is therefore unavoidable.
\end{example}

A polynomial bound on the number of real zeros of a system of the same form
was obtained in~\cite{koiran_sparse_curve}: the number of
real isolated solutions of~\cref{eq:system}  is $O(d^3t+d^2t^3)$.
More generally, this bound applies to the number of connected components
of the set of real solutions.
\Cref{th:main} can therefore be viewed as an analogue for intersection
multiplicity of this result from~\cite{koiran_sparse_curve}.
Both results belong to {\em fewnomial theory}, which seeks quantitative bounds
on polynomial systems\footnote{More general functions than polynomials can sometimes be allowed, e.g., the exponential and logarithmic functions,  or more generally Pfaffian functions.}
in terms of the number of nonzero monomials occurring
in the system. Historically, quantitative bounds were first obtained
in terms of the degrees of the polynomials involved instead of the
number of monomials. For instance,  B\'ezout's theorem shows that
$\deg(F) \cdot \deg(G)$
is an upper bound on the intersection multiplicity
of any isolated solution of the system (the same bound of course
applies in fact to
the sum of intersection multiplicities of all isolated solutions).
The bound in \cref{th:main} is of a mixed form since it involves the number of monomials of $G$ but the degree of $F$.
It is natural to ask for a bound that depends only on the number of monomials
in $F$ and $G$. We therefore highlight the following question.
\begin{question} \label{q1}
  Let $F, G \in \C[x,y]$ be two polynomials with at most $t$ monomials each.
  What is the maximal multiplicity of an isolated solution
  $p = (a, b) \in (\C \setminus \{0\})^{2}$ of system~\cref{eq:system}?
  In particular, is the multiplicity of $p$ polynomially bounded
  in $t$, i.e., bounded from above by $t^c$
  where $c$ is some absolute constant?
\end{question}

The first focus of fewnomial theory~\cite{Khov91,sottile11}
was on the number of real solutions
of multivariate systems.
In particular, a seminal result by Khovanskii~\cite{Khov91}
shows that a system of $n$
polynomials in $n$ variables involving $l+n+1$ distinct monomials has
less than 
\begin{align} \label{khovanskii}
  2^{\binom {l+n}2}(n+1)^{l+n}
\end{align} 
non-degenerate positive solutions. 
This bound was improved by  Bihan and Sottile~\cite{BiSot} to
 \[
 \frac{e^2+3}{4}2^{\binom l2}n^l.
 \]
These results can be viewed as far reaching generalizations of Descartes' rule
of signs, which implies that a univariate polynomial with $t$ monomials
has at most $t-1$ positive roots.
As pointed out in~\cite{koiran_sparse_curve},
the analogue of \cref{q1} for real
roots is very much open: it is not known whether the number of isolated
real solutions of a system $F(x,y)=G(x,y)=0$ is polynomially bounded in the
number of monomials of $F$ and $G$.

The first result on fewnomials and multiplicities seems to be an analogue of
Descartes' rule due to Haj{\'o}s (see~\cite{hajos53,Lenstra99b}
and \cref{le:hajos} below): the multiplicity
of any nonzero root of a univariate polynomial $f \in \C[X]$ with $t$ monomials 
is at most $t-1$.
For multivariate systems, an analogue of Khovanskii's bound~\cref{khovanskii}
was obtained by Gabrielov~\cite{gabrielov_multiplicity}.
He showed that for a system of $n$ polynomials in $n$ variables
involving at most $t$ monomials, the multiplicity of any solution in
$(\C \setminus \{0\})^n$
does not exceed
\[
2^{t(t-1)/2}[\min(n,t)+1]^t.
\]
In particular, this provides a $2^{O(t^2)}$ upper bound for \cref{q1}. Gabrielov's result also implies an exponential bound for the problem
  considered in \cref{th:main} instead of our polynomial bound.
After~\cite{gabrielov_multiplicity}, subsequent work has  focused on
multiplicity estimates for more general (``Noetherian'') multivariate systems,
see, e.g.,~\cite{gab98,binyamini15}.

It is easily seen that the Haj{\'o}s lemma is tight,
  but nevertheless proving tight bounds for ``structured'' univariate
  polynomials may be challenging.
  As an example, we propose the following question
  (we consider in \cref{complexity} more general structured
  families of polynomials).
  \begin{question} \label{fgplus1}
    Let $f,g \in \C[X]$ be two univariate polynomials with at most~$t$ monomials
    each. What is the maximal multiplicity of a nonzero root
    of the polynomial $f(x)g(x)+1$?
    In particular, is there an $o(t^2)$ bound on this maximal multiplicity?
  \end{question}
  Note that the Haj{\'o}s lemma yields $t^2$ as an upper bound for the maximal
  multiplicity. We note also that this question can be cast as a question
  on bivariate systems of the form~\cref{eq:system}
  with at most $t+1$ monomials
  each, namely:
  $F(x,y)\coloneqq y-f(x)$, $G(x,y) \coloneqq g(x)y+1$.
  Indeed, the multiplicity of any root~$a$ of $fg+1$
  is equal to the multiplicity of $(a,f(a))$ as a root of this bivariate
  system (this follows from instance from \cref{le:halphen}
  below). It is not clear whether this more ``geometric'' formulation is
  useful to make progress on \cref{fgplus1}, though.

\subsection{Sparse polynomials in algebraic complexity} \label{complexity}

Obtaining effective bounds for sparse polynomials or sparse polynomial systems
is an interesting subject in its own right, but there is also a connection
to lower bounds in algebraic complexity.
 In particular, the following ``real $\tau$-conjecture''
was put forward in~\cite{koiran_real_tau} as a variation
on the original $\tau$-conjecture by Shub and Smale (Problem~4
in~\cite{SmaleProblems}).
\begin{conjecture}[real $\tau$-conjecture]  \label{realtau}
Consider a nonzero polynomial of the form  
\[
f(X)=\sum_{i=1}^k \prod_{j=1}^m f_{ij}(X),
\]
where each $f_{ij} \in \R[X]$ has at most $t$ monomials. The number of real roots of $f$ is bounded by a polynomial function of $kmt$.
\end{conjecture}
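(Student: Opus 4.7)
The plan is to attempt an inductive/reduction approach; I expect it to stall at a step that is, in fact, equivalent to a major open problem in arithmetic complexity. First I would dispose of the two easy extremal cases. If $k = 1$, then $f$ is a single product of $m$ sparse univariate polynomials; each factor has at most $t-1$ real roots by Descartes' rule of signs, so $f$ has at most $m(t-1)$ real roots. If $m = 1$, then $f$ itself has at most $kt$ monomials, giving $kt - 1$ real roots. The genuine difficulty lies in the regime where both $k$ and $m$ are large.

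To attack the general case, I would introduce slack variables $y_{ij}$ representing the partial products $\prod_{l \leq j} f_{il}(X)$, so that the real roots of $f$ project from the real solutions of the sparse polynomial system
\begin{equation*}
  y_{i1} = f_{i1}(X), \qquad y_{ij} = y_{i,j-1}\, f_{ij}(X)\ (2 \leq j \leq m), \qquad \sum_{i=1}^k y_{im} = 0,
\end{equation*}
in $1 + km$ real variables and $O(kmt)$ distinct monomials. The hope would then be to count its isolated real solutions by a Khovanskii--Rolle projection argument in the spirit of \cite{Khov91,BiSot}, or by an iterated bivariate intersection argument extending the ideas underlying \cref{th:main}. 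A parallel line of attack is a Wronskian-based induction on the outer sum: writing $g_i \coloneqq \prod_j f_{ij}$, the Wronskian $W(g_1, \ldots, g_k)$ vanishes on common zeros arising from linear dependences of the $g_i$, and in favourable cases one can control the real zeros of $\sum g_i$ via those of $W$ together with those of the factors $f_{ij}$.

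The main obstacle is twofold. On the quantitative side, every off-the-shelf fewnomial bound applied to the slack system above yields exponential dependence in $t$: Khovanskii's bound~\cref{khovanskii} gives $2^{O((kmt)^2)}$, Gabrielov's multiplicity bound quoted earlier gives $2^{O(t^2)}$ at each isolated solution, and the Wronskian route loses an exponential factor in $m$ because the determinantal expansion of $W$ produces a polynomial with exponentially many monomials even when each $g_i$ is sparse in the depth-$2$ representation. More fundamentally, as explained in \cite{koiran_real_tau}, a polynomial bound of the form of \cref{realtau} implies a superpolynomial separation of \vp{} from \vnp{} via the Shub--Smale transfer theorem. Any honest proof plan must therefore carry genuine algebraic-complexity lower-bound content, so no purely analytic induction, Wronskian argument, or Khovanskii--Rolle projection can suffice on its own. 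The step I expect to be the real obstacle is exactly this: converting analytic or combinatorial control over the slack system into a \emph{polynomial}, rather than exponential, count in $kmt$ — a task that appears to require a qualitatively new idea and not merely a sharper version of the tools currently available to fewnomial theory.
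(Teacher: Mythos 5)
There is nothing in the paper to compare your attempt against: the statement you were handed is \cref{realtau}, the real $\tau$-conjecture, which the paper imports from \cite{koiran_real_tau} and states as an \emph{open} conjecture, with no proof offered or claimed. The paper only records that it implies $\vp \neq \vnp$ (in full strength by \cite{tavenasphd}), that it holds ``on average'' \cite{briquel_burgisser_average}, and that partial results appear in \cite{grenet_koiran_powering,koiran_sparse_curve}; the paper's own contribution, \cref{th:main}, is a multiplicity analogue of one such partial result, not an attack on the conjecture itself. So your decision not to claim a proof is the right one, and your diagnosis of where any attempt stalls is essentially accurate: the slack-variable system is genuinely fewnomial but every off-the-shelf multivariate bound (Khovanskii, Bihan--Sottile, Gabrielov) is exponential in the number of monomials, and the Wronskian route is precisely the one used in \cite{koiran_wronskian_approach,koiran_sparse_curve} to handle special cases, losing control when both $k$ and $m$ grow. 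One caveat: the implication toward $\vp \neq \vnp$ explains why a proof would be a breakthrough, but it is a heuristic for difficulty, not an obstruction theorem ruling out ``purely analytic'' arguments; that part of your discussion should not be presented as if it were a proof that such arguments cannot succeed.

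Two small corrections to your easy cases. Descartes' rule bounds the number of \emph{positive} roots of a $t$-nomial by $t-1$; counting negative roots and possibly the root $0$, a factor with $t$ monomials has up to $2t-1$ real roots, so for $k=1$ the correct bound is $m(2t-1)$ rather than $m(t-1)$, and for $m=1$ it is $2kt-1$ rather than $kt-1$. This changes nothing qualitatively --- both remain polynomial in $kmt$ --- but the bounds as you stated them are wrong. Finally, note that the multiplicity variant \cref{multau} is a formally weaker statement (it follows from \cref{realtau} via \cite{hrubes_complex_tau}), and it is that weaker circle of questions, not \cref{realtau}, that the present paper's techniques address.
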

It was shown in~\cite{koiran_real_tau} that this conjecture implies the separation of
the complexity classes $\vp$ and \vnp.\footnote{The separation result derived
  in~\cite{koiran_real_tau} is actually a little weaker than $\vp \neq \vnp$;
  a proof that \cref{realtau} implies the full separation
  $\vp \neq \vnp$
  can be found in the PhD thesis by S\'ebastien Tavenas~\cite{tavenasphd}.
  In fact, a bound on the number of real roots that is polynomial
  in $kt2^m$ would suffice for that purpose
  \cite[Theorems~3.25 and~3.38]{tavenasphd}.}
See~\cite{grenet_koiran_powering,koiran_sparse_curve} for some partial results toward \cref{realtau} and applications to algebraic complexity. It was recently shown that
\cref{realtau} is true ``on average''~\cite{briquel_burgisser_average}.
For earlier work connecting ``sparse like'' polynomials to algebraic
complexity see~\cite{BC76,Gri82,Risler85}.
For an introduction to the $\vp$ versus $\vnp$ problem we recommend~\cite{burgisser_completeness}.
The authors' interest for intersection multiplicity was sparked by the following variation 
on \cref{realtau}:
\begin{conjecture}[$\tau$-conjecture for multiplicities] \label{multau}
Consider a nonzero polynomial of the form  
\[
f(X)=\sum_{i=1}^k \prod_{j=1}^m f_{ij}(X),
\]
where each $f_{ij} \in \C[X]$ has at most $t$ monomials.
The multiplicity of any nonzero complex root of $f$ is bounded by a polynomial function of $kmt$.
\end{conjecture}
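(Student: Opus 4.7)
Since \cref{multau} is stated as a conjecture, what follows is a research strategy rather than a complete argument; I describe the most natural attack and identify the main obstacle.

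The first step is a reduction to a polynomial system, in the spirit of the observation following \cref{fgplus1}. Introduce auxiliary variables $y_{ij}$ for $1 \le i \le k$ and $1 \le j \le m$ and consider the system
\[
 y_{i,1}=f_{i1}(x),\quad y_{i,j}=y_{i,j-1}\,f_{ij}(x)\ (j\ge 2),\quad \sum_{i=1}^{k}y_{i,m}=0\,.
\]
The system has $km+1$ equations in $km+1$ unknowns, each equation involves at most $t+1$ monomials, and each equation has degree~$1$ in the auxiliary variables. Via the standard graph-of-a-map transfer (a multivariate version of the reduction alluded to after \cref{fgplus1}), the multiplicity of a nonzero root $a$ of $f$ equals the intersection multiplicity of the above system at the corresponding point in $(\C\setminus\{0\})^{km+1}$, provided all partial products are nonzero at $a$ (the vanishing case can be handled by perturbing the $f_{ij}$ or by peeling off the vanishing factors before linearising).

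The second step would be to invoke a multiplicity bound polynomial in $k$, $m$ and $t$ for this multivariate system. This is where the approach currently stalls: Gabrielov's bound~\cite{gabrielov_multiplicity} only yields $2^{O((kmt)^{2})}$, which is exponential, and no polynomial multivariate analogue of \cref{th:main} is presently known. The main obstacle is therefore to establish a several-variable generalisation of \cref{th:main}. A natural warm-up is \cref{fgplus1}: the reduction casts it as a bivariate system of sparsity $t+1$, but one of the defining polynomials has degree $\deg f$, which is unbounded in $t$, so \cref{th:main} does not directly apply \textemdash{} a new ingredient is needed to replace the role of $d$ by $t$ in this two-variable setting.

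A second, more speculative attempt would use a Wronskian-style argument. Since $f^{(r)}$ expands as a sum of at most $k\binom{r+m-1}{m-1}$ products of $m$ polynomials, each still $t$-sparse (derivatives preserve sparsity), a nonzero root of $f$ of multiplicity $M$ forces $M$ structured polynomials with controlled complexity parameters to vanish simultaneously at the same point. One could then try to combine a lower bound on the rank of the evaluation-jet map on this space of structured polynomials with fewnomial counting. Here too the difficulty is to avoid an exponential dependence in $m$ or $t$ when estimating the ambient dimension, and any progress would likely feed back into the multivariate bound needed by the first approach.
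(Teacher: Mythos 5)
\Cref{multau} is stated in the paper as an open conjecture: the paper does not prove it, and only records that it would follow from the real $\tau$-conjecture (\cref{realtau}) by~\cite{hrubes_complex_tau}, and that Gabrielov's theorem~\cite{gabrielov_multiplicity} gives an exponential ($2^{O(t^2)}$-type) bound. So there is no proof in the paper to compare against, and your submission --- candidly labelled a research plan --- does not prove the statement either; you correctly identify its status, and no complete argument should be expected here.

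As for the plan itself: the linearisation into a system of $km+1$ equations, each with $O(t)$ monomials, is a reasonable reformulation and is in the spirit of the remark following \cref{fgplus1}, but two points deserve caution. First, the transfer of multiplicity through the graph construction needs a multivariate analogue of \cref{le:halphen} (or a direct local-ring argument); the paper's statement is strictly bivariate, so this step, while plausible, is not free. Second, your proposed workaround when some partial product vanishes at the root --- ``perturbing the $f_{ij}$'' --- is not sound: a perturbation generally destroys the root or changes its multiplicity, and ``peeling off the vanishing factors'' does not make sense either, since $f$ is a \emph{sum} of products and one cannot remove a factor from a single summand without changing $f$ (note that by \cref{le:hajos} each $f_{ij}$ can vanish at $a$ to order up to $t-1$, so this case is not negligible). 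The decisive obstruction, however, is the one you name yourself: no multiplicity bound polynomial in the number of monomials is known for multivariate sparse systems --- this is already open for two $t$-sparse bivariate polynomials (\cref{q1}), and \cref{th:main} cannot substitute for it because one of the polynomials in your system has unbounded degree. Thus the reduction translates the conjecture rather than proving it; the only currently known route to \cref{multau} is the conditional one via \cref{realtau}.
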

The idea of looking at multiplicities in this context was introduced by Hrube\v{s}~\cite{hrubes_complex_tau}. \Cref{multau} implies a slightly weaker separation
than $\vp \neq \vnp$, which can be obtained under a bound
on multiplicities that is only polynomial in
$kt2^{m}$~\cite[Section~2.2]{tavenasphd}.
Finally we point out that there is also a ``$\tau$-conjecture
for Newton polygons,'' which implies the separation
$\vp \neq \vnp$~\cite{koiran_tau_newton}.
It was recently announced by Hrube\v{s}~\cite{hrubes_runners}
that \cref{realtau} implies
the $\tau$-conjecture for Newton polygons.
Moreover, it follows from~\cite{hrubes_complex_tau} that \cref{realtau}
also implies \cref{multau}. The real $\tau$-conjecture is therefore the
strongest of these 3 conjectures (and there is no known implication
between the other two).

\subsection{Outline of the proof}

In this section we present some of the ideas of the proof of \cref{th:main} in an informal way. The actual proof is presented in \cref{sec:proof}
after some preliminaries in \cref{sec:puiseux,sec:mult,sec:deriv_lemmas}.

Like in~\cite{koiran_sparse_curve} we rely heavily
on the properties of Wronskian determinants.
Let us assume first that the relation $F(x,y)=0$ can be inverted locally
in a neighborhood of $(a,b)$ as $y=\phi(x)$, where $\phi$ is an analytic
function. In this case we just have to bound the multiplicity of $a$
as a root of the univariate function
\[
G(x,\phi(x))=\sum_{\alpha \in \Lambda} c_{\alpha} x^{\alpha_1}\phi(x)^{\alpha_2} \, ,
\]
where the support $\Lambda$ of $G$ is of size $t$.
This multiplicity can be bounded with the help of the Wronskian
determinant of the $t$ functions $\{x^{\alpha_1}\phi(x)^{\alpha_2} \colon \alpha \in \Lambda\}$
(see \cref{pr:val_by_wron} and \cref{re:val_by_wron}).
The entries of the Wronskian determinant may be of very high degree due
to the presence of the exponents $\alpha_1,\alpha_2$, over which we have
no control. Fortunately, it turns out that high exponents can be factored
out and we can reduce to the case of a determinant with entries
of low degree in $x$ and $\phi(x)$.
We can then conclude by applying B{\'e}zout's theorem (\cref{th:bezout}) to
$F(x,y)=0$ and to a low-degree determinant.

The above proof idea is not always applicable since it might not be possible
to invert the relation $F(x,y)=0$ as $y=\phi(x)$. In particular, we must
explain how to handle the case where $(a,b)$ is a singular point of the
curve $F(x,y)=0$. It is well known that the behavior of an algebraic
curve near a singular point can be described with the help of Puiseux series
(they were invented for that purpose).
In the actual proof we therefore work with Puiseux series instead of
analytic functions, and we use a characterization of intersection
multiplicity in terms of Puiseux series (\cref{le:halphen}).

\section{Puiseux series, their derivatives, and Wronskians} \label{sec:puiseux}

\begin{definition}
A \emph{Puiseux series} is a formal series of the form
\begin{equation}\label{eq:puiseux}
\pser(x) = \sum_{i = 1}^{\infty}c_{i}x^{\lambda_{i}} \, ,
\end{equation}
where the coefficients 
$c_{i}$ are nonzero complex numbers and the exponents $(\lambda_{i})_{i \ge 1} \subset \Q^{\N}$ form a strictly increasing sequence of rational numbers with the same denominator. (We also allow the sum in~\cref{eq:puiseux} to be finite.) There is also a special empty series denoted by $0$.
\end{definition}
Puiseux series can be added and multiplied in the usual way. Moreover, it is well known that the set of Puiseux series forms an algebraically closed field (see, e.g.,~\cite[Chapter~IV, \S~3.2]{walker_algebraic_curves}). In this paper, we denote the field of Puiseux series by $\puiseux$.

\begin{definition}
Given a Puiseux series $\pser(x)$ as in~\cref{eq:puiseux} we define its \emph{valuation} $\val(\pser(x))$ as the lowest exponent of $\pser(x)$, i.e., $\val(\pser(x)) \coloneqq \lambda_{1}$. We use the convention that $\val(0) = +\infty$. We denote by $\vring \subset \puiseux$ the set of all Puiseux series with nonnegative valuation,
\[
\vring \coloneqq \{\pser(x) \in \puiseux \colon \val(\pser(x)) \ge 0 \} \, .
\]
\end{definition}

It is easy to check that the valuation map has the following two properties. For every pair of Puiseux series $\pser(x), \pserii(x) \in \puiseux$ we have
\begin{equation}\label{eq:val}
\begin{aligned}
\val(\pser(x)\pserii(x)) &= \val(\pser(x)) + \val(\pserii(x)) \, ,\\
\val(\pser(x) + \pserii(x)) &\ge \min\bigl(\val(\pser(x)), \val(\pserii(x)) \bigr) \, .
\end{aligned}
\end{equation}
In particular,~\cref{eq:val} shows that $\vring$ is a subring of $\puiseux$. This subring is called the \emph{valuation ring (of Puiseux series)}. We can now define the derivatives of Puiseux series and their Wronskians.

\begin{definition}
Given a Puiseux series $\pser(x)$ as in~\cref{eq:puiseux} we define its \emph{(formal) derivative} $\diffp{\pser}{x}(x) \in \puiseux$ as
\[
\diffp{\pser}{x}(x) \coloneqq \sum_{i = 1}^{\infty}\lambda_{i}c_{i}x^{\lambda_{i} - 1} \, .
\]
Similarly, for every $n \ge 1$, we denote by $\diffp[n]{\pser}{x} \in \puiseux$ the $n$th derivative of $\pser(x)$, i.e., the series obtained from $\pser(x)$ by deriving it $n$ times. We use the convention that $\diffp[0]{\pser}{x}(x) = \pser(x)$. To improve readability, we also use the notation $\diffp[n]{}{x}(\pser(x))$ instead of $\diffp[n]{\pser}{x}(x)$.
\end{definition}
It is easy to check that derivatives of Puiseux series satisfy the following natural properties. For every pair of Puiseux series $\pser(x), \pserii(x) \in \puiseux$ we have
\begin{equation}\label{eq:der}
\begin{aligned}
\diffp{(\pser + \pserii)}{x}(x) &= \diffp{\pser}{x}(x) +\diffp{\pserii}{x}(x) \, ,\\
\diffp{(\pser\pserii)}{x}(x) &= \diffp{\pser}{x}(x)\pserii(x) + \pser(x)\diffp{\pser}{x}(x) \, .
\end{aligned}
\end{equation}
Moreover, we note that for every Puiseux series $\pser(x) \in \puiseux$ we have the inequality
\begin{equation}\label{eq:val_dif}
\val\Bigl( \diffp{\pser}{x}(x) \Bigr) \ge \val(\pser(x)) - 1 \, .
\end{equation}
(The inequality is strict when $\val(\pser(x)) = 0$.)

\begin{definition}
If $\pser_{1}(x), \dots, \pser_{n}(x) \in \puiseux$ are Puiseux series, then we define their \emph{Wronskian}, denoted $\wron(\pser_{1}(x), \dots, \pser_{n}(x)) \in \puiseux$, as the determinant
\begin{equation}\label{eq:wron}
\wron(\pser_{1}(x), \dots, \pser_{n}(x)) \coloneqq 
\det \begin{bdmatrix}
\pser_{1}(x) & \pser_{2}(x) & \dots & \pser_{n}(x)\\[1.5ex]
\diffp{\pser_{1}}{x}(x) & \diffp{\pser_{2}}{x}(x) & \dots & \diffp{\pser_{n}}{x}(x)\\[1.5ex]
\vdots & \vdots & \vdots & \vdots\\[1.5ex]
\diffp[n-1]{\pser_{1}}{x}(x) & \diffp[n-1]{\pser_{2}}{x}(x) & \dots & \diffp[n-1]{\pser_{n}}{x}(x)
\end{bdmatrix} \, .
\end{equation}
\end{definition}

It is immediate to see that if $\pser_{1}(x), \dots, \pser_{n}(x)$ are linearly dependent over~$\C$, then their Wronskian is identically zero. 
B{\^o}cher~\cite{bocher_dependence} proved that the converse is true in the context of analytic functions.\footnote{An alternative proof for formal power series can be found in~\cite{BoDu}.} It is easy to check that the proof presented in~\cite{bocher_dependence} carries over to Puiseux series. (The proof is based on the fact that $\diffp{\pser}{x}(x) = 0$ implies $\pser(x) = c$ for some $c \in \C$ and this is true for both for analytic functions and Puiseux series.)

\begin{theorem}[{\cite{bocher_dependence}}]\label{th:bocher}
If $\pser_{1}(x), \dots, \pser_{n}(x) \in \puiseux$ are Puiseux series, then their Wronskian is equal to $0$ if and only if $\pser_{1}(x), \dots, \pser_{n}(x)$ are linearly dependent over $\C$. In other words, we have $\wron(\pser_{1}(x), \dots, \pser_{n}(x)) = 0$ if and only if there exist complex constants $c_{1}, \dots, c_{n} \in \C$, not all equal to $0$, such that $c_{1}\pser_{1}(x) + \dots + c_{n}\pser_{n}(x) = 0$.
\end{theorem}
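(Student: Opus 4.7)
The plan is to prove the two directions separately, with the ``only if'' direction handled by induction on $n$. The ``if'' direction is immediate: suppose $c_1 \pser_1 + \cdots + c_n \pser_n = 0$ with the $c_i$ not all zero. Differentiating this identity $j$ times for $j = 0, 1, \ldots, n-1$ and using the linearity of formal differentiation from~\cref{eq:der} shows that the columns of the matrix in~\cref{eq:wron} satisfy a nontrivial linear relation with coefficients $c_1, \ldots, c_n$, so the determinant is zero.

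For the converse I proceed by induction on $n$, with the trivial case $n = 1$ (where $\wron(\pser_1) = \pser_1$) as the base. In the inductive step, I first dispose of the case where some $\pser_i$ vanishes, since then linear dependence holds with a single nonzero coefficient; hence I may assume $\pser_1 \neq 0$. The central tool will be the classical ``reduction of order'' identity
\[
\wron(\pser_1, \ldots, \pser_n) = \pser_1^{n}\, \wron\Bigl( \diffp{}{x}(\pser_2/\pser_1),\ \ldots,\ \diffp{}{x}(\pser_n/\pser_1) \Bigr).
\]
This can be established via the two-step computation $\wron(\pser_1 \cdot 1, \pser_1 g_2, \ldots, \pser_1 g_n) = \pser_1^n \wron(1, g_2, \ldots, g_n) = \pser_1^n \wron(g_2', \ldots, g_n')$ with $g_i \coloneqq \pser_i / \pser_1$, where the first equality is the standard common-factor property of Wronskians and the second follows by expanding the determinant along the first column (whose nonzero entries consist of the single $1$ in the top row). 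Both steps rely only on the product and quotient rules~\cref{eq:der}, which hold for Puiseux series, together with the fact that $\pser_1$ is a unit in the field $\puiseux$. Since the left-hand side vanishes, so does the $(n-1)$-Wronskian on the right, and the induction hypothesis supplies constants $c_2, \ldots, c_n \in \C$, not all zero, with $\sum_{i=2}^{n} c_i (\pser_i/\pser_1)' = 0$.

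By linearity this reads $\bigl(\sum_{i=2}^{n} c_i \pser_i / \pser_1 \bigr)' = 0$, and the fact recalled in the excerpt --- that a Puiseux series with zero derivative is a complex constant, visible immediately from the term-wise definition since each $\lambda_i c_i x^{\lambda_i - 1}$ vanishes only when $c_i = 0$ or $\lambda_i = 0$ --- provides some $c_1 \in \C$ with $\sum_{i=2}^{n} c_i \pser_i / \pser_1 = c_1$. Multiplying by $\pser_1$ yields the desired nontrivial relation $c_1 \pser_1 - c_2 \pser_2 - \cdots - c_n \pser_n = 0$. The main obstacle, in my view, is merely to verify the reduction-of-order identity cleanly in the Puiseux context (the manipulations are standard but must be justified entry by entry via~\cref{eq:der}); once that algebraic identity is in place, the constant-of-integration step and the induction are essentially immediate.
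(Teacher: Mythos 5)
Your proof is correct and follows precisely the B\^ocher reduction-of-order argument that the paper cites and sketches: the key step, that a Puiseux series with zero derivative must be a complex constant, is exactly the fact the paper singles out as what makes B\^ocher's proof carry over from analytic functions. You have simply written out in full the argument that the paper only references.
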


In~\cite{voorhoeve_wronskian}, Wronskians are used to bound multiplicities of zeros of certain functions. The next proposition and its proof is an adaptation of \cite[Theorem~1]{voorhoeve_wronskian} to the context of Puiseux series.

\begin{proposition}
  \label{pr:val_by_wron}
  Suppose that $\pser_{1}(x), \dots, \pser_{n}(x)$ are Puiseux series with nonnegative valuations.
  Then, we have the inequality
\[
\val(\pser_{1}(x) + \pser_{2}(x) + \dots + \pser_{n}(x)) \le
    \frac{n(n - 1)}{2}+ \val\Bigl(\wron\bigl(\pser_{1}(x),\pser_{2}(x),\dots,\pser_{n}(x)\bigr)\Bigr) \, .
\]
\end{proposition}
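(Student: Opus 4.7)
The plan is to mimic the Wronskian argument of Voorhoeve--van der Poorten that is already alluded to in the excerpt. The key trick is a column operation: since the Wronskian is multilinear in its columns, adding columns $1,\dots,n-1$ to column $n$ does not change the determinant, and so
\[
\wron(\pser_1,\dots,\pser_n) \;=\; \wron(\pser_1,\dots,\pser_{n-1},\pserii) ,
\]
where $\pserii(x) \coloneqq \pser_1(x)+\cdots+\pser_n(x)$. Let $v \coloneqq \val(\pserii)$; if $\pserii = 0$ the inequality is trivial, so assume $v$ is finite. I would then expand the right-hand determinant along its last column, obtaining
\[
\wron(\pser_1,\dots,\pser_n) \;=\; \sum_{k=0}^{n-1} (-1)^{k+n-1}\, \diffp[k]{\pserii}{x}(x)\, M_k ,
\]
where $M_k$ is the $(n-1)\times(n-1)$ minor obtained by deleting column $n$ and row $k+1$ of the Wronskian matrix.

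The next step is to bound the valuation of each summand separately. By iterating~\cref{eq:val_dif}, we get $\val(\diffp[k]{\pserii}{x}(x)) \ge v - k$. For the minor $M_k$, every entry in the row of $j$th derivatives satisfies $\val(\diffp[j]{\pser_i}{x}(x)) \ge \val(\pser_i) - j \ge -j$ (using the assumption $\val(\pser_i) \ge 0$). Expanding $M_k$ by the Leibniz formula, each term is a product of $n-1$ entries, one from each remaining derivative row, and hence has valuation at least
\[
-\sum_{\substack{0 \le j \le n-1 \\ j \ne k}} j \;=\; -\frac{n(n-1)}{2} + k .
\]
By~\cref{eq:val}, this lower bound propagates to $\val(M_k) \ge -\tfrac{n(n-1)}{2} + k$.

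Combining these two bounds, each summand $\diffp[k]{\pserii}{x}(x)\, M_k$ has valuation at least $(v-k) + (-\tfrac{n(n-1)}{2}+k) = v - \tfrac{n(n-1)}{2}$, independently of $k$. Applying~\cref{eq:val} one more time to the sum yields
\[
\val\bigl(\wron(\pser_1,\dots,\pser_n)\bigr) \;\ge\; v - \frac{n(n-1)}{2} ,
\]
which is precisely the claimed inequality after rearrangement.

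I do not expect a real obstacle here: the argument is entirely formal once one knows the column-operation trick. The only point worth checking carefully is the bookkeeping of which rows survive in $M_k$ so that the exponents $k$ cancel neatly between the $\pserii^{(k)}$ factor and the minor, and the hypothesis $\val(\pser_i) \ge 0$ is used exactly once, in the entrywise bound on $M_k$.
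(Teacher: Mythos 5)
Your proof is correct and follows essentially the same approach as the paper's: replace the last column by the sum $\pserii$ via multilinearity/column operations, Laplace-expand along that column, bound $\val(M_k)$ entrywise using $\val(S_i)\ge 0$ and $\val(S')\ge\val(S)-1$, and observe that the $k$'s cancel. The only cosmetic difference is that you keep the signs $(-1)^{k+n-1}$ explicit, whereas the paper absorbs them into the minors $M_k$; this has no effect on the valuation bound.
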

\begin{proof}
  Let $\pserii(x) \coloneqq \pser_{1}(x) + \pser_{2}(x) + \dots + \pser_{n}(x)$.
    By multilinearity of the determinant we have
  $\wron\bigl(\pser_{1}(x),\pser_{2}(x),\dots,\pser_{n}(x)\bigr) = \wron\bigl(\pser_{1}(x),\dots,\pser_{n-1}(x), \pserii(x)\bigr)$.
  Using the Laplace expansion, we obtain
\[
\wron\bigl(\pser_{1}(x),\pser_{2}(x),\dots,\pser_{n-1}(x), \pserii(x)\bigr) = \sum_{k = 0}^{n-1}\diffp[k]{\pserii}{x}(x)M_{k} \, ,
\]
where $M_{k} \in \puiseux$ are some $(n-1)\times(n-1)$ minors of the matrix
in~\cref{eq:wron}. Since we assumed that
$\pser_{1}(x), \dots, \pser_{n}(x)$
have nonnegative valuations,~\cref{eq:val_dif} implies that
every entry in row $i$ of this matrix has valuation at least
  $-(i-1)$.
Hence, by~\cref{eq:val} we have 
\[
\val(M_{k}) \ge -\sum_{i=2}^{n} (i-1)+k=\frac{-n(n-1)}{2} + k \, .
\]
In particular,
\[
\val\Bigl( \wron\bigl(\pser_{1}(x),\pser_{2}(x),\dots,\pser_{n-1}(x), \pserii(x)\bigr) \Bigr) \ge \min_{k}\Bigl(\val\bigl(\diffp[k]{\pserii}{x}(x)\bigr) + \val(M_{k})\Bigr) \, .
\]
By~\cref{eq:val_dif}, the right-hand side is bounded from below by
\[
\min_{k}\bigl( \val(\pserii(x)) - k-\frac{n(n-1)}{2} + k\bigr)
= \val(\pserii(x))-\frac{n(n-1)}{2} \, . \qedhere
\]
\end{proof}

 \begin{remark} \label{re:val_by_wron}
  The original version of \cref{pr:val_by_wron}
  in~\cite{voorhoeve_wronskian} is about analytic functions rather than
  Puiseux series. The restriction to analytic functions makes it possible
  to obtain a better bound: instead of the term $n(n-1)/2$
  in  \cref{pr:val_by_wron} we have just $n-1$ in
  \cite[Theorem~1]{voorhoeve_wronskian}.
  \end{remark}

  \begin{example}
    Let $\pser_{1}(x)\coloneqq x^{\alpha_1},\dots,\pser_{n}(x) \coloneqq x^{\alpha_n}$
    where $0<\alpha_1 < \cdots < \alpha_n < 1$.
    The valuation of $\pser_{1}(x) + \pser_{2}(x) + \dots + \pser_{n}(x)$
    is equal to $\alpha_1$, and it is easily checked that
    \[
    \val\Bigl(\wron\bigl(\pser_{1}(x),\pser_{2}(x),\dots,\pser_{n}(x)\bigr)\Bigr) = \alpha_1+\cdots+\alpha_n - \frac{n(n-1)}{2} \, .
    \]
    Since the $\alpha_i$ can be taken as close to 0 as desired, this example
    shows that the inequality in \cref{pr:val_by_wron} is essentially optimal.
    \end{example}

\section{Intersection multiplicity} \label{sec:mult}

In this section, we recall the definition of intersection multiplicity of two curves and we give an equivalent characterization that is suitable for our purposes.

Let $\bratf$ be the field of rational functions in two variables over $\C$.
Then, for every $p = (a,b) \in \C^{2}$ we define the \emph{local ring at $p$}, $\lring_{p} \subset \bratf$, as the ring of all rational functions whose denominators do not vanish at $p$,
\[
\lring_{p} \coloneqq \Bigl\{\frac{F(x,y)}{G(x,y)} \colon F(x,y), G(x,y) \in \C[x,y], G(a,b) \neq 0 \Bigr\} \, .
\]

\begin{definition}
If $F(x,y), G(x,y) \in \C[x,y]$ are two polynomials and $p = (a,b) \in \C^{2}$ is any point, then we define the \emph{intersection multiplicity (of $F(x,y)$ and $G(x,y)$ at point $p$)} as
\[
\mint_{p}(F,G) \coloneqq \dim_{\C}\bigl( \lring_{p}/\langle F,G \rangle \bigr) \, ,
\]
where $\langle F,G \rangle$ is the ideal in $\lring_{p}$ generated by $F(x,y)$ and $G(x,y)$, and $\dim_{\C}$ refers to the dimension of $\lring_{p}/\langle F,G \rangle$ interpreted as a vector space over $\C$.
\end{definition}

The next lemma gathers some classical properties of intersection multiplicity.

\begin{lemma}\label{le:inter_mult}
  Intersection multiplicity has the following properties:
\begin{enumerate}
\item $\mint_{p}(F,G) = 0$ if and only if $F(a,b) \neq 0$ or $G(a,b) \neq 0$;
\item If $F(x,y)$ and $G(x,y)$ are nonzero polynomials, then $\mint_{p}(F,G) = +\infty$ if and only if $F(x,y)$ and $G(x,y)$ have a common 
  factor $H(x,y)$ that satisfies $H(a,b) = 0$;
\item $\mint_{p}(F, G) = \mint_{p}(G, F)$;
\item If $F(x,y) = F_{1}(x,y)F_{2}(x,y)$, then $\mint_{p}(F, G) = \mint_{p}(F_{1}, G) + \mint_{p}(F_{2},G)$;
\item If $L \colon \C^{2} \to \C^{2}$ is an invertible affine map and we define $\trF \coloneqq F \circ L$, $\trG \coloneqq G \circ L$, then $\mint_{p}(F,G) = \mint_{L^{-1}(p)}(\trF, \trG)$.
\end{enumerate}
\end{lemma}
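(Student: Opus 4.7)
Since these are all classical properties of intersection multiplicity (found for instance in Fulton's \emph{Algebraic Curves}), my plan is to proceed by standard ideal-theoretic manipulations in the local ring $\lring_{p}$, handling (3), (5), (1), (2), and finally the delicate item (4).

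Properties (3) and (5) are formal: symmetry is immediate from $\langle F,G\rangle = \langle G,F\rangle$, while for affine invariance the pullback $h\mapsto h\circ L$ furnishes a ring isomorphism $\lring_{L^{-1}(p)}\to\lring_{p}$ that carries $\langle \trF,\trG\rangle$ to $\langle F,G\rangle$, so the quotient rings are isomorphic as $\C$-vector spaces. For (1), $\mint_{p}(F,G)=0$ if and only if $\langle F,G\rangle=\lring_{p}$; if $F(a,b)\neq 0$ then $F$ is a unit of $\lring_{p}$ and $\langle F,G\rangle=\lring_{p}$, whereas if $F(a,b)=G(a,b)=0$ then $\langle F,G\rangle$ is contained in the maximal ideal $\mathfrak{m}_{p}=\{h\in\lring_{p}\colon h(a,b)=0\}$ and is therefore proper.

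For (2), a common factor $H$ with $H(a,b)=0$ yields a surjection $\lring_{p}/\langle F,G\rangle \twoheadrightarrow \lring_{p}/\langle H\rangle$, and the target is infinite-dimensional because the quotient of $\lring_{p}$ by the ideal generated by any irreducible component of $H$ through $p$ is a one-dimensional Noetherian local domain, which cannot have finite dimension over $\C$. Conversely, if no common factor of $F$ and $G$ vanishes at $p$, then after removing the unit common factors in $\lring_{p}$ we may assume $F, G \in \C[x,y]$ are coprime; then the set of common zeros of $F$ and $G$ is finite by B\'ezout's theorem, so $\C[x,y]/\langle F,G\rangle$ is a finite-dimensional $\C$-algebra that splits as a product of its localizations at those finitely many zeros, one of which is $\lring_{p}/\langle F,G\rangle$.

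Property (4) is the technical heart. My plan is to exhibit the short exact sequence of $\C$-vector spaces
\begin{equation*}
0 \longrightarrow \lring_{p}/\langle F_{2},G\rangle \xrightarrow{\ \cdot F_{1}\ } \lring_{p}/\langle F_{1}F_{2},G\rangle \longrightarrow \lring_{p}/\langle F_{1},G\rangle \longrightarrow 0 \, ,
\end{equation*}
with the canonical projection on the right; additivity of dimensions then delivers (4). Surjectivity of the projection is obvious, and the image of multiplication by $F_{1}$ coincides with the kernel of the projection thanks to the identity $F_{1}\lring_{p}+\langle F_{1}F_{2},G\rangle=\langle F_{1},G\rangle$. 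The main obstacle is injectivity of multiplication by $F_{1}$: from $F_{1}h=F_{1}F_{2}r+Gs$ in $\lring_{p}$ one must deduce $h\in\langle F_{2},G\rangle$. I would handle this by cases: if $F_{1}$ and $G$ share a polynomial factor vanishing at $p$, then property (2) forces both sides of (4) to equal $+\infty$; otherwise, common factors are units in $\lring_{p}$ and can be cancelled, reducing to $\gcd(F_{1},G)=1$ in $\C[x,y]$. In that case, clearing denominators in $F_{1}(h-F_{2}r)=Gs$ and applying unique factorization in the UFD $\C[x,y]$ gives $G\mid(h-F_{2}r)$ in $\lring_{p}$, as required.
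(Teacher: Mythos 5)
The paper does not prove this lemma: it is presented as a list of classical facts and implicitly points to Fulton's \emph{Algebraic Curves}, cited a few lines later in the text for the axiomatic characterization (Fulton's axioms in Section~3.2 are precisely properties (1)--(5), and that same section establishes them from the local-ring definition). Your proof is correct and follows that standard route: formal isomorphisms for (3) and (5), the unit argument for (1), a surjection onto a one-dimensional local Noetherian domain for the forward direction of (2) together with passage to the finite-dimensional $\C$-algebra $\C[x,y]/\langle F,G\rangle$ for the converse, and for (4) the short exact sequence
\[
0 \longrightarrow \lring_{p}/\langle F_{2},G\rangle \xrightarrow{\ \cdot F_{1}\ } \lring_{p}/\langle F_{1}F_{2},G\rangle \longrightarrow \lring_{p}/\langle F_{1},G\rangle \longrightarrow 0 \, ,
\]
with left-exactness reduced to the coprime case and settled by unique factorization. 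One caution: in the converse of (2) you should not invoke B\'ezout's theorem in the form stated in the paper (\cref{th:bezout}), since that statement is already phrased in terms of the intersection multiplicities being finite and summing to at most $d_1 d_2$, which is precisely what you are in the middle of establishing, so the appeal is circular. A clean and non-circular replacement is to note that, for coprime $F,G\in\C[x,y]$, the resultants $\mathrm{Res}_y(F,G)\in\C[x]\setminus\{0\}$ and $\mathrm{Res}_x(F,G)\in\C[y]\setminus\{0\}$ both lie in the ideal $\langle F,G\rangle$, which forces $\C[x,y]/\langle F,G\rangle$ to be finite-dimensional; the decomposition into localizations at the finitely many common zeros then proceeds as you describe.
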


There are many equivalent characterizations of intersection multiplicity. For instance, there is an axiomatic definition given in~\cite[Section~3.2]{fulton_curves}, a definition using resultants~\cite[Section~6.1]{brieskorn_knorrer},
a definition by parametrization~\cite[Chapter~I, Section~3.2]{greuel_lossen_shustin} or by infinitely near points~\cite[Section~4.4]{wall_singular_points}. In this work, we will use a variant of the characterization of the intersection multiplicity by parametrization. Suppose that $F(x,y), G(x,y) \in \C[x,y]$ are two polynomials. Since the field of Puiseux series is algebraically closed, we can decompose $F$ and $G$ as
\begin{equation}\label{eq:decomp_to_roots}
\begin{aligned}
F(x,y) &= \pser_{0}(x)(y - \pser_{1}(x)) \dots (y - \pser_{\degy_{F}}(x)) \, , \\
G(x,y) &= \pserii_{0}(x)(y - \pserii_{1}(x)) \dots (y - \pserii_{\degy_{G}}(x)) \, ,
\end{aligned}
\end{equation}
where
$\pser_{0}(x),\pserii_{0}(x) \in \C[X]$
and $\pser_{1}(x), \dots, \pser_{\degy_{F}}(x), \pserii_{1}(x), \dots, \pserii_{\degy_{G}}(x) \in \puiseux$ are Puiseux series (not necessarily distinct). Furthermore, we can order the factors in such a way that there are two numbers $0 \le r \le \degy_{F}$ and $0 \le s \le \degy_{G}$ such that the series $\pser_{1}(x), \dots, \pser_{r}(x)$, $\pserii_{1}(x), \dots, \pserii_{s}(x)$ have \emph{strictly} positive valuations,\footnote{In this list we include the series that are identically 0 since their valuation is $+\infty$ by convention.}
while the valuations of the series $\pser_{r + 1}(x), \dots, \pser_{\degy_{F}}(x)$ and $\pserii_{s + 1}(x), \dots, \pserii_{\degy_{G}}(x)$ are zero or smaller than zero. 
Moreover, let $m \ge 0$ be the highest number such that $F(x,y)$ is divisible by $x^{m}$ and $n \ge 0$ be the highest number such that $G(x,y)$ is divisible by $x^{n}$.

The following proposition characterizes the intersection multiplicity.

\begin{proposition}\label{le:halphen}
If $m > 0$ and $n > 0$, then $\mint_{0}(F,G) = +\infty$. Otherwise, we have the equality
\begin{equation}\label{eq:halphen}
\begin{aligned}
\mint_{0}(F,G) &= m s + \sum_{i =1}^{r}\val\Bigl( G\bigl(x,\pser_{i}(x)\bigr) \Bigr) \\
&= n r + \sum_{j =1}^{s}\val\Bigl( F\bigl(x,\pserii_{j}(x)\bigr) \Bigr) \\
&= m s + n r + \sum_{i = 1}^{r}\sum_{j = 1}^{s}\val\bigl(\pser_{i}(x) - \pserii_{j}(x)\bigr) \, .
\end{aligned}
\end{equation}
Furthermore, if $p = (a,b) \in \C^{2}$ is any point and we define $\trF(x,y) \coloneqq F(a + x, b + y)$, $\trG(x,y) \coloneqq G(a + x, b + y)$, then $\mint_{p}(F, G) = \mint_{0}(\trF, \trG)$.
\end{proposition}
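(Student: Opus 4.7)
The plan is to translate to the origin via property~(5) of \cref{le:inter_mult}, dispose of the case in which $F$ and $G$ share the factor~$x$, and then obtain~\cref{eq:halphen} by combining additivity of intersection multiplicity with the Puiseux factorizations of~$F$ and~$G$. Applying property~(5) to the affine map $L(x,y)=(a+x,b+y)$ gives $\mint_{p}(F,G)=\mint_{0}(\trF,\trG)$, which is the last sentence of the statement, so I assume $p=0$ from now on. If $m>0$ and $n>0$ then $H(x,y)=x$ is a common factor of~$F$ and~$G$ vanishing at the origin, so property~(2) yields $\mint_{0}(F,G)=+\infty$.

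In the remaining case, the second equality in~\cref{eq:halphen} follows from the first by applying it to $(G,F)$ and using the symmetry property~(3), and the third equality will be derived below from the first; hence it suffices to prove the first equality. Using property~(4) on the factorizations $F=x^{m}F_{1}$ and $G=x^{n}G_{1}$, together with $\mint_{0}(x,G)=s$ and $\mint_{0}(F,x)=r$ (both being the $y$-adic order of $G(0,y)$ or $F(0,y)$, which a Newton-polygon analysis identifies with the number of Puiseux roots of positive valuation), the first equality reduces to the case $m=n=0$. In that case it becomes the classical Halphen identity $\mint_{0}(F,G)=\sum_{i=1}^{r}\val(G(x,\pser_{i}(x)))$. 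To prove it, I would regroup the Puiseux factors $y-\pser_{i}(x)$ of~$F$ into Galois orbits under $\pser(x)\mapsto\pser(\runit x)$ for suitable roots of unity~$\runit$, each orbit corresponding to an irreducible factor of~$F$ in the UFD~$\C[\![x,y]\!]$. Working in the formal local ring (where $\lring_{0}/\langle F,G\rangle\cong\C[\![x,y]\!]/\langle F,G\rangle$ whenever the quotient is finite-dimensional), property~(4) splits $\mint_{0}(F,G)$ as a sum over orbits, and for an orbit parametrized by $(t^{e},\psi(t))$ a direct computation in $\C[\![t]\!]$ gives $\mathrm{ord}_{t}G(t^{e},\psi(t))=\sum_{\pser\in\mathrm{orbit}}\val(G(x,\pser(x)))$.

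For the third equality, I would expand
\[
\val(G(x,\pser_{i}(x)))=\val(\pserii_{0})+\sum_{j=1}^{\degy_{G}}\val(\pser_{i}(x)-\pserii_{j}(x)),
\]
and use that when $j>s$ one has $\val(\pser_{i})>0\ge\val(\pserii_{j})$, so~\cref{eq:val} is sharp and $\val(\pser_{i}-\pserii_{j})=\val(\pserii_{j})$. Summing over~$i$ from~$1$ to~$r$ and substituting into the first formula, the third equality reduces to the identity $\val(\pserii_{0})+\sum_{j>s}\val(\pserii_{j})=n$. This follows from a Newton-polygon analysis of~$G$ viewed as a polynomial in~$y$ over $\puiseux$: the $x$-adic valuation~$n$ of~$G$ equals $\min_{k}\val(g_{k})$ where $g_{k}(x)\in\C[x]$ is the coefficient of~$y^{k}$ in~$G$, the slope description of the Newton polygon shows this minimum is attained at~$k=s$, and Vieta's formula computes $\val(g_{s})=\val(\pserii_{0})+\sum_{j>s}\val(\pserii_{j})$.

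The main obstacle is the Halphen identity: property~(4) of \cref{le:inter_mult} only applies to factorizations in~$\C[x,y]$, whereas the convenient factorization of~$F$ into linear factors $y-\pser_{i}(x)$ lives in~$\puiseux[y]$. The passage to Galois orbits and to the irreducible factorization in $\C[\![x,y]\!]$, together with the handling of each branch by an explicit parametrization, is the substantive content of the proof, while the auxiliary Newton-polygon identities are routine by comparison.
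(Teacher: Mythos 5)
The paper itself does not prove this proposition: it derives the last sentence from \cref{le:inter_mult}(5) and refers to \cite{bix_conics} (Definition~14.4 and Theorem~14.6 there) for the three equalities in \cref{eq:halphen}, so your proposal takes a genuinely different route by actually sketching a proof along the classical ``parametrization'' approach. Your preliminary reductions are all correct: the translation to the origin via \cref{le:inter_mult}(5); the infinite case when $m,n>0$ via \cref{le:inter_mult}(2); the second line from the first by symmetry \cref{le:inter_mult}(3); the reduction to $m=n=0$ via \cref{le:inter_mult}(4) together with $\mint_0(x,G)=s$ once $n=0$; and the third line from the first by expanding $G(x,\pser_i(x))=\pserii_0(x)\prod_j(\pser_i(x)-\pserii_j(x))$, using $\val(\pser_i-\pserii_j)=\val(\pserii_j)$ when $j>s$, and invoking $\val(\pserii_0)+\sum_{j>s}\val(\pserii_j)=n$ --- though this last identity is better attributed to the slope--valuation dictionary for the lower hull of the Newton polygon than to ``Vieta's formula'', since Vieta alone does not preclude cancellation among the elementary symmetric functions. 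The substantive part, as you yourself note, is the Halphen identity $\mint_0(F,G)=\sum_{i=1}^r\val(G(x,\pser_i(x)))$ for $m=n=0$; the steps you sketch --- the isomorphism $\lring_0/\langle F,G\rangle\cong\C[\![x,y]\!]/\langle F,G\rangle$ by faithful flatness of completion, Weierstrass preparation writing $F$ as a unit times irreducible distinguished polynomials $W_k$ whose roots are the Galois orbits of the positive-valuation $\pser_i$, additivity of the multiplicity over the $W_k$, and the branch formula $\dim_\C\C[\![x,y]\!]/\langle W_k,G\rangle=\mathrm{ord}_t G(t^e,\psi(t))=\sum_{\pser\in\mathrm{orbit}}\val(G(x,\pser(x)))$ via the parametrization $(t^e,\psi(t))$ --- are the standard ingredients and are all correct, but each requires its own nontrivial justification, and spelled out they essentially rebuild the local intersection theory (cf.\ \cite{greuel_lossen_shustin,bix_conics}) that the paper deliberately outsources. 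The trade-off is clear: your version is self-contained in principle but substantially longer, while the paper's treatment is short at the cost of an external reference.
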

We note that \cref{eq:halphen} is sometimes proven under additional assumptions (such as $m=n=0$), see, e.g.,~\cite[Chapter~4, Section~5.1]{walker_algebraic_curves} or~\cite[Section~4.1]{wall_singular_points}. However, the variant stated in \cref{le:halphen} is valid in general: for a detailed proof,
we refer to \cite[Chapter~IV]{bix_conics} and in particular to Definition~14.4 and Theorem~14.6 of this reference. The second part of \cref{le:halphen} follows from \cref{le:inter_mult}(5).

We finish this section by stating some known results. The first one states that the numbers $r,s$ can be easily characterized by means of Newton polygons. This follows from the Newton--Puiseux algorithm. Although the knowledge of this algorithm is not necessary to understand the results of this paper (we only need \cref{pr:val_positive_roots} stated below), it is useful to point out the main features of this algorithm. The Newton--Puiseux algorithm allows us to compute the decomposition given in~\cref{eq:decomp_to_roots}. To compute this decomposition, we denote 
\begin{align*}
F(x,y) &= F_{0}(x) + F_{1}(x)y + \dots + F_{\degy_{F}}(x)y^{\degy_{F}} \, , \\
G(x,y) &= G_{0}(x) + G_{1}(x)y + \dots + G_{\degy_{G}}(x)y^{\degy_{G}} \,
\end{align*}
and we note that $m = \min_{k}\{\val(F_{k}(x)) \}$,
$n = \min_{k}\{\val(G_{k}(x))\}$. Then, we define the \emph{Newton polygons} of $F$ and $G$ as the convex hulls of points 
\begin{align*}
&\{\bigl(k, \val(F_{k}(x))\bigr) \colon 0 \le k \le \degy_{F},  \, F_{k}(x) \neq 0 \} \, ,\\ 
&\{\bigl(k, \val(G_{k}(x))\bigr) \colon 0 \le k \le \degy_{G}, \, G_{k}(x) \neq 0 \} \, .
\end{align*}
\begin{remark}
  The Newton polygon is sometimes defined as the convex hull of the points
  $(\alpha,\beta)$ such that the monomial $y^{\alpha}x^{\beta}$ appears in
  $F$ with a nonzero coefficient. These two polygons have the same set
  of lower edges, and as explained below this is all that matters to
  determine the valuations of the series in~\cref{eq:decomp_to_roots}.
\end{remark}
The Newton--Puiseux algorithm implies that the valuations of the series $\pser_{1}(x), \dots, \pser_{\degy_{F}}(x), \pserii_{1}(x), \dots, \pserii_{\degy_{G}}(x) \in \puiseux$ are given by the (negated) slopes of the lower edges of the corresponding Newton polygons. 
Furthermore, the number of series with a given valuation (counted with multiplicity) is equal to the length of the projection of the corresponding edge on the first axis. This does not include the series that are equal to $0$, but their number can also be deduced from the Newton polygons, since it is equal to
 $\min\{k \colon F_{k} \neq 0\}$
  and $\min\{k \colon G_{k} \neq 0 \}$
  respectively. In particular, we obtain the following characterization of the numbers of series in  \cref{eq:decomp_to_roots}
with strictly positive valuations (denoted by $r$ and $s$
as in the paragraphs above).
It will be used in the proof of \cref{le:card_zeros_of_shift}.

\begin{proposition}\label{pr:val_positive_roots}
  We have the equalities $r = \min\{k \colon \val(F_{k}(x)) = m\}$ and $s = \min\{k \colon \val(G_{k}(x)) = n\}$.
\end{proposition}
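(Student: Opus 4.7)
The plan is to combine the two facts recalled immediately before the proposition: (a) the number of Puiseux series in~\cref{eq:decomp_to_roots} that are identically zero equals $k_0 \coloneqq \min\{k \colon F_k \neq 0\}$, and (b) the valuations of the remaining (nonzero) series are the negated slopes of the lower edges of the Newton polygon of $F$, each valuation occurring with multiplicity equal to the horizontal projection length of the corresponding edge. Together these two contributions should account for every series counted in $r$, and the claim for $s$ will follow by applying the same argument to $G$.

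First I would observe that the identically-zero series each have valuation $+\infty > 0$ and therefore contribute a total of $k_0$ to $r$. For the nonzero series with strictly positive valuation, I would set $k^{\star} \coloneqq \min\{k \colon \val(F_{k}(x)) = m\}$. Since $\val(F_k) = +\infty$ when $F_k = 0$, this minimum is in fact attained at some index with $F_{k^{\star}} \neq 0$, and $k_0 \le k^{\star}$. These nonzero, positive-valuation series correspond precisely to the lower edges of the Newton polygon of strictly negative slope. The key geometric observation is that $(k^{\star}, m)$ is the leftmost vertex of the polygon at the minimum height $m$: any point $(k, \val(F_k))$ with $\val(F_k) = m$ must lie on the lower boundary (since no polygon point sits strictly below it), and the leftmost such point cannot be interior to a lower edge, so it must be a vertex. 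Consequently, the lower edges of strictly negative slope run horizontally from the leftmost polygon vertex $(k_0, \val(F_{k_0}))$ to $(k^{\star}, m)$, with total horizontal length $k^{\star} - k_0$.

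Adding the two contributions gives $r = k_0 + (k^{\star} - k_0) = k^{\star}$, which is exactly the stated formula; the same reasoning applied to $G$ yields $s = \min\{k \colon \val(G_{k}(x)) = n\}$. The main (mild) obstacle is simply the careful bookkeeping of the identically-zero roots, which must be counted separately from the Newton-polygon edges since they do not appear as points of the polygon at all; once this is done the identity falls out from the convex-geometric fact that the leftmost point at minimum height is automatically a vertex.
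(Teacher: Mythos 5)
Your argument is correct and follows exactly the route the paper indicates: the paper states the proposition ``follows from the Newton--Puiseux algorithm'' and spells out, in the paragraph preceding the proposition, the same two ingredients you use (the identically-zero series counted separately via $\min\{k : F_k \neq 0\}$, and the positive-valuation nonzero series counted via horizontal lengths of the negatively-sloped lower edges). You merely fill in the small convex-geometry step that the leftmost point of the Newton polygon at minimum height is a vertex, which the paper leaves implicit.
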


\begin{figure}[t]
\begin{tikzpicture}
\begin{scope}[scale = 0.8]
      \draw[gray!60, ultra thin] (0,0) grid (8,4);
       \fill[fill=lightgray, fill opacity = 0.7]
        (1,3) -- (2,2) -- (3,1) -- (4,1) -- (7,2) -- (5,3) -- cycle;
        \draw[very thick] (1,3) -- (2,2) -- (3,1) -- (4,1) -- (7,2) -- (5,3) -- cycle;
        \node[circle,fill, inner sep = 0pt,minimum size = 0.2cm] at (1,3) {};
        \node[circle,fill, inner sep = 0pt,minimum size = 0.2cm] at (2,2) {};
        \node[circle,fill, inner sep = 0pt,minimum size = 0.2cm] at (3,1) {};
        \node[circle,fill, inner sep = 0pt,minimum size = 0.2cm] at (4,1) {};
        \node[circle,fill, inner sep = 0pt,minimum size = 0.2cm] at (5,3) {};
        \node[circle,fill, inner sep = 0pt,minimum size = 0.2cm] at (6,2) {};
        \node[circle,fill, inner sep = 0pt,minimum size = 0.2cm] at (7,2) {};
\end{scope}
\end{tikzpicture}
\vspace*{-0.3cm}
\caption{Newton polygon from \cref{ex:newton}.}\label{fig:newton}
\end{figure}

\begin{example}\label{ex:newton}
Consider the polynomial
\[
F(x,y) = xy(y - x + x^{2})^{2}(y - 1 + x)(xy^{3} - 1) \, .
\]
To find its decomposition, note that
\[
(xy^{3} - 1) = x(y^{3} - x^{-1}) = x(y - x^{1/3})(y - \omega x^{1/3})(y - \omega^{2} x^{1/3}) \, ,
\]
where $\omega \coloneqq (-1 + \imath \sqrt{3})/2$ is a third root of unity.
Therefore, we have
\[
F(x,y) = x^2\prod_{i = 1}^{7}(y - \pser_{i}(x)) \, ,
\]
where $\pser_{1}(x) = 0$, $\pser_{2}(x) = \pser_{3}(x) = x - x^{2}$, $\pser_{4}(x) = 1 - x$, $\pser_{5}(x) = x^{-1/3}$, $\pser_{6}(x) = \omega x^{-1/3}$, $\pser_{7}(x) = \omega^{2} x^{-1/3}$. Note that exactly three of these series have strictly positive valuation (namely $\pser_{1}(x)$, $\pser_{2}(x)$, and $\pser_{3}(x)$). Furthermore, we have
\begin{align*}
F(x,y) &= y(x^3-3x^4+3x^5-x^6) + y^{2}(-2x^2+3x^3-x^5) \\
&+ y^{3}(x+x^2-2x^3) + y^{4}(-x-x^4+3x^5-3x^6+x^7) \\
&+ y^{5}(2x^3-3x^4+x^6) + y^{6}(-x^2-x^3+2x^4) + y^{7}x^{2} \, .
\end{align*}
In particular, the Newton polygon of $F$ is the convex hull of the points 
\[
\{(1,3), (2,2), (3,1), (4,1), (5,3), (6,2), (7,2)\} \, ,
\]
as depicted in \cref{fig:newton}. Its lower edges have slopes $-1$, $0$, and $1/3$, while the lengths of their projections on the abscissa are equal to $2$, $1$, and $3$ respectively. As discussed above, the edge with slope $-1$ indicates that the decomposition has two series of valuation $1$ (these are
$\pser_{2}(x)$ and $\pser_{3}(x)$),
the edge with slope $0$ indicates that the decomposition has one series with valuation $0$ (this is $\pser_{3}(x)$), and the edge with slope $1/3$ indicates that the decomposition has three series with valuation $-1/3$ (these are
 $\pser_{5}(x)$, $\pser_{6}(x)$, and $\pser_{7}(x)$).
Furthermore, we have 
$\min\{k \colon F_{k} \neq 0\} =1$
and $\min\{k \colon \val(F_{k}(x)) = m\} = 3$, which, as claimed in \cref{pr:val_positive_roots}, is equal to the number of series with strictly positive valuation.
\end{example}

We refer to \cite[Chapter~1]{casas-alvero_singularities} for a detailed presentation of the Newton--Puiseux algorithm and in particular to \cite[Exercise~1.3]{casas-alvero_singularities} for the correspondence between the number of series with a given valuation and the length of the projection of the corresponding edge.

The next result follows from Gauss' lemma and the fact that irreducible polynomials over fields of characteristic zero are separable.

\begin{lemma}\label{le:gauss}
  Suppose that $F(x,y) \in \C[x,y]$ is an irreducible bivariate polynomial over $\C$ and consider the decomposition of $F(x,y)$ given in~\cref{eq:decomp_to_roots}. Then, the roots $\pser_{1}(x), \dots \pser_{\degy_{F}}(x) \in \puiseux$ are pairwise distinct.   Furthermore, if $G(x,y) \in \C[x,y]$ is any polynomial that satisfies $G(x,\pser_{i}(x)) = 0$ for some $1 \le i \le d$, then $F(x,y)$ divides $G(x,y)$  in $\C[x,y]$.
\end{lemma}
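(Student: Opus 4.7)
The plan is to view $F(x,y)$ as a polynomial in $y$ over the field $\C(x)$ and reduce both assertions to standard facts about separable irreducible polynomials, with Gauss' lemma serving as the bridge between $\C[x][y]$ and $\C(x)[y]$. I first dispose of the degenerate case $\degy_F = 0$: then $F \in \C[x]$ is irreducible, hence $F = c(x-a)$ for some $c,a \in \C$, the decomposition~\cref{eq:decomp_to_roots} involves no Puiseux series factors, and both claims hold vacuously. Assume from now on that $\degy_F \geq 1$.

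Since $F$ is irreducible in $\C[x,y] = \C[x][y]$ and $\C[x]$ is a UFD, Gauss' lemma upgrades this to irreducibility of $F$ in $\C(x)[y]$. Because $\C(x)$ has characteristic zero, every irreducible polynomial in $\C(x)[y]$ is separable, so its roots in any extension field are pairwise distinct. The Puiseux series field $\puiseux$ is an algebraic closure of $\C(x)$ (as recalled at the start of \cref{sec:puiseux}), and by~\cref{eq:decomp_to_roots} the series $\pser_1(x), \dots, \pser_{\degy_F}(x)$ are precisely the $\degy_F$ roots of $F$ in $\puiseux$. Pairwise distinctness follows at once.

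For the divisibility claim, suppose $G(x, \pser_i(x)) = 0$ for some $i$. Then $F$ and $G$ share the root $\pser_i(x)$ in $\puiseux$. Since $F$ is irreducible in $\C(x)[y]$, its monic associate $F/\pser_0(x)$ is the minimal polynomial of $\pser_i(x)$ over $\C(x)$ and therefore divides $G$ in $\C(x)[y]$; equivalently, $F \mid G$ in $\C(x)[y]$. A second application of Gauss' lemma, using that $F$ is primitive in $\C[x][y]$ (an immediate consequence of its irreducibility in $\C[x,y]$), allows one to clear denominators from the quotient $G/F \in \C(x)[y]$ and to conclude that $F \mid G$ already in $\C[x,y]$.

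I do not anticipate any real obstacle here: the proof is essentially a packaging of standard algebra. The points where some care is needed are the two invocations of Gauss' lemma — first to lift irreducibility from $\C[x,y]$ to $\C(x)[y]$, and then to descend divisibility from $\C(x)[y]$ back to $\C[x,y]$ — together with the tacit use that $\puiseux$ furnishes an algebraic closure of $\C(x)$ in which one may identify the $\pser_i(x)$ with the full set of roots of $F$ regarded as a polynomial in $y$.
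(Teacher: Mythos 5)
Your proof is correct and follows essentially the same route as the paper's: Gauss' lemma lifts irreducibility from $\C[x][y]$ to $\C(x)[y]$, separability in characteristic zero gives pairwise distinctness of the roots, and a second application of Gauss' lemma descends divisibility from $\C(x)[y]$ to $\C[x,y]$. The only cosmetic difference is in the intermediate divisibility step, where you invoke the minimal polynomial of $\pser_i(x)$ over $\C(x)$ whereas the paper argues via the greatest common divisor of $F$ and $G$ in $(\puiseux)[y]$ being computable by Euclidean division and hence lying in $(\C(x))[y]$; both are standard and equivalent.
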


\begin{proof}
  Gauss' lemma (see, e.g.,~\cite[Section~2.6]{adkins_weintraub} or \cite[Section~4.2]{lang_algebra}) implies that $F$ is still irreducible if we consider it as an element of $\bigl(\C(x) \bigr)[y]$ (polynomials
with coefficients in the field of rational functions of $x$)
  since $\C[x]$ is a unique factorization domain.
    Therefore, the fact that the series $\pser_{1}(x), \dots \pser_{\degy_{F}}(x) \in \puiseux$ are pairwise distinct follows from the separability of irreducible polynomials in $\bigl(\C(x) \bigr)[y]$ (see, e.g.,~\cite[Proposition~4.6]{morandi_fields} or~\cite[Corollary~6.12]{lang_algebra}). To prove the second part, suppose that $G(x, \pser_{i}(x)) = 0$ for some $i$. Then, the polynomials $F$ and $G$ have a nontrivial greatest common divisor in $\bigl( \puiseux \bigr)[y]$. However, since both $F$ and $G$ belong to the subring $\bigl(\C(x) \bigr)[y]$, their greatest common divisor also belongs to this subring (because the gcd can be computed using Euclidean division). As $F$ is irreducible, we obtain that $G$ is divisible by $F$ in $\bigl(\C(x) \bigr)[y]$. We use Gauss' lemma once again to conclude that $G$ is divisible by $F$ in $\C[x,y]$.
\end{proof}

The next two results are B{\'e}zout's theorem and the Haj{\'o}s lemma.

\begin{theorem}[B{\'e}zout's theorem in affine space]\label{th:bezout}
Let $F(x,y),G(x,y) \in \C[x,y]$ be two polynomials of degrees $d_{1} \ge 0$ and $d_{2} \ge 0$ respectively. Let 
\[
\Omega \coloneqq \{(a,b) \in \C^{2} \colon F(a,b) = G(a,b) = 0, \, \mint_{(a,b)}(F,G) < +\infty \}
\]
be the set of isolated solutions of the system $F(x,y) = G(x,y) = 0$. Then, we have the inequality
\[
\sum_{p \in \Omega}\mint_{p}(F,G) \le d_{1}d_{2} \, .
\]
\end{theorem}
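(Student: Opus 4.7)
The plan is to relate $\sum_{p \in \Omega}\mint_p(F, G)$ to the degree of the resultant $R(x) \coloneqq \mathrm{Res}_y(F, G) \in \C[x]$ and then bound this degree by $d_1 d_2$. If $F$ and $G$ share a common factor $H$ of positive degree, then~\cref{le:inter_mult}(2) places every zero of $H$ outside $\Omega$; writing $F = H F_1$, $G = H G_1$ with $F_1, G_1$ coprime, items~(1) and~(4) of~\cref{le:inter_mult} give $\mint_p(F, G) = \mint_p(F_1, G_1)$ for all $p \in \Omega$, and $\deg F_1 \cdot \deg G_1 \le d_1 d_2$ closes the reduction. After a generic linear change of coordinates, permitted by~\cref{le:inter_mult}(5), I may further assume that the $y^{d_1}$-coefficient of $F$ and the $y^{d_2}$-coefficient of $G$ are nonzero constants, so that $\deg_y F = d_1$ and $\deg_y G = d_2$.

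For each $p = (a, b) \in \Omega$, let $\pser_1^{(a)}(x), \dots, \pser_{d_1}^{(a)}(x) \in \puiseux$ denote the Puiseux roots in $y$ of $F(a+x, y)$. Because the leading $y$-coefficient is a nonzero constant, all $\pser_j^{(a)}$ have non-negative valuation in $x$ and $m = 0$ in the notation of~\cref{le:halphen}; the branches of $F$ at $p$ are precisely the shifts $\pser_j^{(a)} - b$ for indices $j$ with $\pser_j^{(a)}(0) = b$. \Cref{le:halphen} therefore yields
\[
\mint_p(F, G) = \sum_{j \,:\, \pser_j^{(a)}(0) = b} \val_x\bigl(G(a+x, \pser_j^{(a)}(x))\bigr).
\]
Every remaining index $j$ contributes zero valuation: if $(a, \pser_j^{(a)}(0)) \notin \Omega$, coprimality forces $G(a, \pser_j^{(a)}(0)) \ne 0$, so the valuation is zero. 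Summing over $p \in \Omega$, the double sum therefore extends without change to all $j \in \{1, \dots, d_1\}$ and all $a \in \C$. Writing $F = c \prod_{j=1}^{d_1}(y - \phi_j(x))$ globally, the classical identity $R(x) = c^{d_2}\prod_{j=1}^{d_1} G(x, \phi_j(x))$ converts the right-hand side into $\sum_{a \in \C} \mathrm{mult}_a(R) = \deg R$.

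The remaining step, which I expect to be the main obstacle, is the degree estimate $\deg R \le d_1 d_2$. I would prove it by direct bookkeeping in the Sylvester matrix of $F$ and $G$: writing $F = \sum_i F_i(x) y^i$ and $G = \sum_j G_j(x) y^j$, the total-degree hypothesis gives $\deg_x F_i \le d_1 - i$ and $\deg_x G_j \le d_2 - j$, whence every entry at position $(i, j)$ has $x$-degree at most $j - i^*$, with $i^* = i$ in the first $d_2$ rows and $i^* = i - d_2$ in the last $d_1$ rows. For every permutation $\sigma$, the corresponding monomial in the determinant has $x$-degree at most
\[
\sum_i \bigl(\sigma(i) - i^*\bigr) = \binom{d_1 + d_2 + 1}{2} - \binom{d_1 + 1}{2} - \binom{d_2 + 1}{2} = d_1 d_2,
\]
giving the claim. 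An equivalent route would use Puiseux expansions of the $\phi_j$ at $x = \infty$, where the total-degree bound forces each $\phi_j$ to grow at most linearly and thus each $G(x, \phi_j(x))$ to have degree at most $d_2$ at infinity.
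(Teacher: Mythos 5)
The paper does not give a proof of \cref{th:bezout}; it is stated without proof as a classical fact (affine B\'ezout). Your proposal supplies a correct self-contained proof via resultants, phrased so that it fits naturally into the paper's Puiseux-series framework. The reduction to coprime $F$, $G$ via parts (1), (2), (4) of \cref{le:inter_mult} is sound, and the generic affine change of variables (part (5)) makes the leading $y$-coefficients nonzero constants, which forces $m = 0$ in \cref{le:halphen} and gives non-negative valuation to every Puiseux root of $F(a+x, y)$, for every $a$. The identity $\mint_p(F,G) = \sum_{j : \pser_j^{(a)}(0) = b} \val_x G(a+x, \pser_j^{(a)}(x))$ is then exactly \cref{le:halphen}; the extension of the double sum over all $(a,j)$ is justified because, under coprimality, any uncounted index has base point $(a, \pser_j^{(a)}(0))$ at which $G$ cannot vanish (otherwise that point would itself lie in $\Omega$, by part (2) of \cref{le:inter_mult}); and the classical factorization $\mathrm{Res}_y(F,G)(a+x) = c^{d_2}\prod_j G(a+x, \pser_j^{(a)}(x))$ converts the total into $\deg_x \mathrm{Res}_y(F,G)$, using that this resultant is a nonzero polynomial when $F, G$ are coprime with constant leading $y$-coefficients. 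The Sylvester-matrix degree bound $\deg_x \mathrm{Res}_y(F,G) \le d_1 d_2$ is a standard computation, and your binomial arithmetic
\[
\binom{d_1+d_2+1}{2} - \binom{d_1+1}{2} - \binom{d_2+1}{2} = d_1 d_2
\]
checks out. One observation worth recording: for coprime $F, G$ with constant leading $y$-coefficients you actually prove the \emph{equality} $\sum_{p \in \Omega}\mint_p(F,G) = \deg_x \mathrm{Res}_y(F,G)$, so the sole source of inequality in the theorem is the resultant degree bound, whose deficit corresponds to intersections at infinity.
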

    The following result can be found in~\cite{hajos53} and in a
      more general form in~\cite[Proposition~3.2]{Lenstra99b}. We provide a short proof
      for the sake of completeness.
\begin{lemma}[Haj{\'o}s' lemma]\label{le:hajos}
Suppose that $F(y) \in \C[y]$ is a univariate polynomial with $t \ge 1$ monomials and let $z \in \C \setminus \{0\}$ be a nonzero root of $F(y)$. Then, the multiplicity of $z$ as root of $F(y)$ is not greater than $t - 1$. 
\end{lemma}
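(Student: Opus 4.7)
The plan is to proceed by induction on the number of monomials $t$. The base case $t = 1$ is vacuous: $F(y) = c y^{a}$ has no nonzero roots, so the condition on $z$ is never satisfied. For the inductive step I assume $t \geq 2$ and that the lemma holds for every polynomial with strictly fewer monomials.

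The key preparatory move is to factor out the lowest power of $y$. Write $F(y) = \sum_{i=1}^{t} c_i y^{a_i}$ with $a_1 < \cdots < a_t$ and all $c_i \neq 0$, and set $\tilde{F}(y) \coloneqq \sum_{i=1}^{t} c_i y^{a_i - a_1}$, so that $F(y) = y^{a_1}\tilde{F}(y)$. The polynomial $\tilde{F}$ still has $t$ monomials, but now its constant term equals $c_1 \neq 0$. Since $z \neq 0$, the multiplicity $m$ of $z$ as a root of $F$ equals its multiplicity as a root of $\tilde{F}$.

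Now I pass to the derivative and apply the induction hypothesis. Differentiation kills the constant term of $\tilde{F}$ and preserves every other monomial (each coefficient $c_i(a_i - a_1)$ with $i \geq 2$ is the product of a nonzero scalar and a strictly positive integer, hence nonzero), so $\tilde{F}'$ has exactly $t - 1$ monomials and is not identically zero. If $m \geq 2$, then $z$ is a nonzero root of $\tilde{F}'$ of multiplicity $m - 1$, and the induction hypothesis gives $m - 1 \leq (t-1) - 1 = t - 2$, hence $m \leq t - 1$; the remaining case $m \leq 1$ is immediate from $t \geq 2$. The only subtlety worth watching, though it is minor, is that the preliminary factorization ensuring $\tilde{F}(0) \neq 0$ is what allows differentiation to drop the monomial count by exactly one, and without it the induction would not close.
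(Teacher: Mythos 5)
Your proof is correct and follows essentially the same route as the paper's: induct on $t$, factor out the lowest power of $y$ so the constant term is nonzero, differentiate (which drops the monomial count by exactly one), and apply the induction hypothesis to the derivative. You spell out a couple of minor points the paper leaves implicit (that the non-constant coefficients survive differentiation, and the $m \leq 1$ case), but the argument is the same.
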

\begin{proof}
We prove the claim by induction on $t$. If $t = 1$, then the claim is trivial. Otherwise, we can write $F(y) = y^{m}(a_{0} + a_{1}y + \dots + a_{d}y^{d})$ for some $m,d \in \N$ and complex constants $(a_{0}, \dots, a_{d})$ such that $t$ of them are nonzero and $a_{0} \neq 0$. We note that is it enough to prove the claim for $G(y) \coloneqq a_{0} + a_{1}y + \dots + a_{d}y^{d}$. Let $z \in \C\setminus\{0\}$ be a nonzero root of $G(y)$. If the multiplicity of $z$ is higher than $1$, then $z$ is a root of $G'(y) = a_{1} + 2a_{2}y + \dots + da_{d}y^{d-1}$. Moreover, by the induction hypothesis, the multiplicity of $z$ as root of $G'(y)$ is not higher than $t - 2$. Hence, the multiplicity of $z$ as root of $G(y)$ is not higher than $t - 1$.
\end{proof}

\section{Two lemmas about derivatives}\label{sec:deriv_lemmas}

In this section, we present two lemmas about derivatives that are used in the proof of our main theorem. These results appeared in~\cite{koiran_sparse_curve,koiran_wronskian_approach} in the context of analytic functions and they carry over to Puiseux series. We use the convention that $\N = \{0,1,\dots\}$ and $\posN = \{1,2,\dots\}$. For every $k \ge 0$, we denote by $\powseq_{k} \subset \N^{\posN}$ the set of sequences defined as
\[
\powseq_{k} \coloneqq \bigl\{(s_{1},s_{2},\dots) \in \N^{\posN} \colon \sum_{i = 1}^{\infty}is_{i} = k \bigr\} \, .
\]
We note that every sequence in $\powseq_{k}$ has finitely many nonzero entries. Furthermore, for every $s \in \powseq_{k}$ we denote $\abs{s} \coloneqq \sum_{i = 1}^{\infty}s_{i}$ and we note that $\abs{s} \le k$. 

\begin{lemma}\label{le:deriv_power}
  There exist integer constants $(\powcon_{n, s})_{n \in \N, s \in \N^{\posN}}$ such that for every nonzero Puiseux series 
$\pser(x) \in \puiseux$  and every $k,n \in \N$ we have
\[
\diffp[k]{}{x}\bigl( \pser(x)^{n}\bigr) = \sum_{s \in \powseq_{k}} \powcon_{n,s} \pser(x)^{n - \abs{s}}\prod_{l = 1}^{k}\Bigl( \diffp[l]{\pser}{x}(x) \Bigr)^{s_{l}} \, .
\]
(We use the convention that $0^{0} = 1$ and that an empty product is equal to~$1$.)
\end{lemma}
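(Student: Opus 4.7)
The plan is to proceed by induction on $k$, defining the integer constants $\xi_{n,s}$ by a recursion that falls out naturally when one differentiates. The base case $k=0$ is immediate: $\powseq_{0}$ contains only the all-zero sequence $\bo{0}$, $|\bo{0}|=0$, and the empty product equals $1$ by convention, so the formula becomes $\pser(x)^{n} = \xi_{n,\bo{0}}\,\pser(x)^{n}$ and we set $\xi_{n,\bo{0}}\coloneqq 1$.

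For the inductive step, assume the formula holds for some $k\ge 0$ with integer coefficients, and differentiate both sides with respect to $x$ using the product rule from~\cref{eq:der}. Applying the product rule to a generic summand $\xi_{n,s}\,\pser(x)^{n-|s|}\prod_{l=1}^{k}\bigl(\diffp[l]{\pser}{x}(x)\bigr)^{s_{l}}$ produces two families of contributions. The first comes from differentiating the factor $\pser(x)^{n-|s|}$ and gives a coefficient $(n-|s|)\xi_{n,s}$ together with a new index sequence $s'$ obtained from $s$ by incrementing $s_{1}$ by one (thanks to the extra factor $\diffp{\pser}{x}(x)$). The second comes, for each $l\in\{1,\dots,k\}$ with $s_{l}>0$, from differentiating $\bigl(\diffp[l]{\pser}{x}(x)\bigr)^{s_{l}}$, and gives a coefficient $s_{l}\xi_{n,s}$ together with a new index sequence obtained from $s$ by decreasing $s_{l}$ by one and increasing $s_{l+1}$ by one.

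Now I check that the bookkeeping is consistent. In the first case, $|s'|=|s|+1$ and $\sum_{i} i s'_{i} = \sum_{i} i s_{i}+1 = k+1$; in the second case, $|s'|=|s|$ and $\sum_{i} i s'_{i} = \sum_{i}i s_{i} - l + (l+1)=k+1$. So both operations map $\powseq_{k}$ into $\powseq_{k+1}$, and in both cases the power of $\pser(x)$ appearing in the resulting monomial is exactly $n-|s'|$. Collecting like terms, one obtains the desired formula at order $k+1$, where each coefficient $\xi_{n,s'}$ is an integer linear combination (with coefficients of the form $n-|s|$ or $s_{l}$) of the $\xi_{n,s}$ from step $k$; since $n$ and the $s_{l}$ are integers, $\xi_{n,s'}\in\Z$ as required.

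The only ``obstacle'' is the combinatorial bookkeeping described above, and there are no conceptual difficulties: the formula is essentially Faà di Bruno's formula for $f\circ g$ with $f(u)=u^{n}$ and $g=\pser$, and the proof sketched above is a direct induction that does not rely on analytic properties of $\pser$, so it carries over verbatim from the classical power-series setting to Puiseux series (note also that $\puiseux$ is a field, hence negative powers of $\pser(x)\neq 0$ that appear when $|s|>n$ cause no issue).
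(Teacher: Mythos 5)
Your proof is correct and follows the same route the paper indicates: induction on $k$ using the product/sum rules for formal derivatives of Puiseux series (the paper's \cref{eq:der}), with the two types of differentiation contributions and the $\powseq_k \to \powseq_{k+1}$ bookkeeping you carry out matching the argument outsourced to~\cite[Lemma~10]{koiran_wronskian_approach}.
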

The proof of~\cref{le:deriv_power} proceeds by induction 
on $k$, using the elementary properties of derivatives given in~\cref{eq:der}. We refer to~\cite[Lemma~10]{koiran_wronskian_approach} for the details. The following lemma appeared in~\cite[Lemma~3]{koiran_sparse_curve}.

\begin{lemma}\label{le:root_deriv}
  For every $k \in \posN$ there exists a polynomial $\rootpol_{k} \in \Z[x_{1}, \dots,x_{l}]$ with integer coefficients, $l \coloneqq \binom{k+2}{2} - 1 = \frac{1}{2}k(k+3)$ variables, and degree at most $2k - 1$ such that for every pair
  $F(x,y) \in \C[x,y] \setminus \{0\}$, $\pser(x) \in \puiseux$ that satisfies $F(x,\pser(x)) = 0$ we have
\[
\Bigl(\diffp[k]{\pser}{x}(x)\Bigr)\Bigl( \diffp{F}{y}(x,\pser(x))\Bigr)^{2k - 1} = \rootpol_{k}\biggl( \Bigl(\diffp[p,q]{F}{x,y}(x,\pser(x))\Bigr)_{1 \le p+q \le k}\biggr) \, .
\]
\end{lemma}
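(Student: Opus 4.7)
The plan is to prove the lemma by induction on $k$, adapting to the Puiseux setting the proof of \cite[Lemma~3]{koiran_sparse_curve}; all the derivative manipulations used below are valid in $\puiseux$ by virtue of \cref{eq:der}.

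Base case $k=1$: formally differentiating the identity $F(x, \pser(x)) = 0$ with respect to $x$ yields
\[
\diffp{F}{x}(x,\pser(x)) + \diffp{F}{y}(x,\pser(x)) \cdot \diffp{\pser}{x}(x) = 0,
\]
so taking $\rootpol_{1}(z_{1,0}, z_{0,1}) \coloneqq -z_{1,0}$ does the job: degree $1 = 2\cdot 1 - 1$ in $l = 2 = \tfrac{1 \cdot 4}{2}$ variables, corresponding to the two first-order partials.

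Inductive step: assume the claim for $k-1 \ge 1$. Abbreviating $F_{p,q} \coloneqq \diffp[p,q]{F}{x,y}(x, \pser(x))$ (so $F_{0,1} = \diffp{F}{y}(x,\pser(x))$, $F_{1,0} = \diffp{F}{x}(x,\pser(x))$, etc.), the hypothesis reads $\diffp[k-1]{\pser}{x}(x) \cdot F_{0,1}^{2k-3} = \rootpol_{k-1}\bigl((F_{p,q})_{1 \le p+q \le k-1}\bigr)$. I would differentiate both sides with respect to $x$ using the chain rule $\frac{d}{dx} F_{p,q} = F_{p+1,q} + F_{p,q+1} \cdot \diffp{\pser}{x}$, then multiply the resulting identity by $F_{0,1}^{2}$, and finally substitute the two identities $F_{0,1} \cdot \diffp{\pser}{x} = -F_{1,0}$ (from the base case) and $\diffp[k-1]{\pser}{x}(x) \cdot F_{0,1}^{2k-3} = \rootpol_{k-1}$ (from the hypothesis) wherever the corresponding quantities occur on the right-hand side. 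This eliminates every derivative of $\pser$ other than $\diffp[k]{\pser}{x}$ and yields an identity of the form $\diffp[k]{\pser}{x}(x) \cdot F_{0,1}^{2k-1} = \rootpol_k\bigl((F_{p,q})_{1 \le p+q \le k}\bigr)$, with $\rootpol_k$ a polynomial with integer coefficients in the $l = \binom{k+2}{2} - 1 = \tfrac{k(k+3)}{2}$ variables indexed by the pairs $(p,q) \in \N^2$ with $1 \le p+q \le k$.

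The main obstacle is the degree bound $\deg \rootpol_k \le 2k - 1$. Writing $\rootpol_k$ explicitly, the above procedure produces
\[
\rootpol_k = \sum_{1 \le p+q \le k-1} \diffp{\rootpol_{k-1}}{z_{p,q}} \bigl(F_{0,1}^{2} F_{p+1,q} - F_{0,1} F_{p,q+1} F_{1,0}\bigr) - (2k-3)\rootpol_{k-1} F_{0,1} F_{1,1} + (2k-3)\rootpol_{k-1} F_{0,2} F_{1,0}.
\]
The last two terms are visibly of degree $(2k-3) + 2 = 2k-1$; since $\deg(\partial \rootpol_{k-1}/\partial z_{p,q}) \le 2k-4$, the first sum is of degree at most $(2k-4) + 3 = 2k-1$. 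Sharpness of the degree count hinges on the fact that the chain rule introduces $\diffp{\pser}{x}$ linearly per differentiation step, so exactly one of the two extra factors of $F_{0,1}$ is needed to clear the implicit denominator in $\diffp{\pser}{x} = -F_{1,0}/F_{0,1}$, the remaining factor contributing the $+1$ that gives $2k-1$ rather than $2k-2$. Throughout the argument all identities are honest Puiseux-series equalities (no genuine division by $F_{0,1}$ is performed), so the same $\rootpol_k$ works uniformly for every pair $(F, \pser)$ with $F(x, \pser(x)) = 0$, even when $F_{0,1}$ vanishes identically along $\pser$.
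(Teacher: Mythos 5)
Your argument is correct, and it adapts the proof of \cite[Lemma~3]{koiran_sparse_curve} to Puiseux series exactly as the paper's body text indicates is possible. The paper's own proof, given in \cref{app:additional_proofs}, is genuinely different: it factorizes $F(x,y) = (y-S(x))G(x,y)$ over $(\puiseux)[y]$, derives a closed-form recursion expressing $\diffp[p,q]{F}{x,y}(x,S(x))$ in terms of derivatives of $G$ and of $S$, observes that $G(x,S(x)) = \diffp{F}{y}(x,S(x))$, and then runs a double induction over three auxiliary families $A_p$, $B_{p,q}$, $C_{p,q}$ of Puiseux series. Your single induction on $k$ is shorter and more elementary; your degree bookkeeping is correct (each $\partial R_{k-1}/\partial z_{p,q}$ has degree at most $2k-4$ and picks up a cubic factor $z_{0,1}^2 z_{p+1,q}$ or $z_{0,1}z_{p,q+1}z_{1,0}$, while the terms $R_{k-1}z_{0,1}z_{1,1}$ and $R_{k-1}z_{0,2}z_{1,0}$ top out at $(2k-3)+2=2k-1$), and the index bookkeeping $p+1+q\le k$, $p+q+1\le k$ also checks out, so $R_k$ lies in the correct polynomial ring in $\tfrac{1}{2}k(k+3)$ variables. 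What the appendix route buys is twofold: it proves the lemma for the strictly larger class $F\in(\puiseux)[y]\setminus\{0\}$, and it stays entirely within the formal calculus of $(\puiseux)[y]$ developed there, so it never needs the multivariate chain rule $\frac{d}{dx}F_{p,q}(x,S(x)) = F_{p+1,q}(x,S(x)) + F_{p,q+1}(x,S(x))S'(x)$ for Puiseux compositions on which your inductive step quietly relies (the paper pronounces that rule ``easily checked'' but does not verify it).
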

    For instance, the case $k=1$ of this lemma is:
      \[
      S'(x) \diffp{F}{y}(x,\pser(x)) =
      -\diffp{F}{x}(x,\pser(x)) \, .
      \]
    The proof presented in \cite{koiran_sparse_curve} is based on the fact that
        for any polynomial $P \in \C[x_{1}, \dots, x_{n}]$ 
    and any Puiseux series $\pser_{1}(x), \dots, \pser_{n}(x) \in \puiseux$
     we have
\[
\diffp[]{}{x}\Bigl( P\bigl(\pser_{1}(x), \dots, \pser_{n}(x)\bigr) \Bigr) = \sum_{k = 1}^{n} \diffp{P}{x_{k}}\bigl(\pser_{1}(x), \dots, \pser_{n}(x)\bigr)\diffp{\pser_k}{x}(x) \, .
\]
The proof in \cite{koiran_sparse_curve} is in fact stated for analytic
functions, but it is easily checked that the same proof applies
to Puiseux series. An alternative proof of \cref{le:root_deriv}
can be found in the appendix to the present paper.

\section{Proof of the main theorem} \label{sec:proof}

In this section, we give the proof of \cref{th:main}. The proof is based on the following two lemmas. The first one relies on the Haj{\'o}s lemma.

\begin{lemma}\label{le:card_zeros_of_shift}
  Suppose that $G(x,y) \in \C[x,y]$ is a polynomial with $t \ge 1$ monomials. Furthermore, fix $(a,b) \in (\C \setminus \{0\})^{2}$ and let 
  \[
  \trG(x,y) \coloneqq G(a + x, b+ y).
  \]
  Then, $\trG(x,y) \in (\puiseux)[y]$ regarded as a polynomial 
  in variable $y$ with coefficients in the field of Puiseux series
  has at most $t-1$ roots (counted with multiplicity)
  that have strictly positive valuations.
\end{lemma}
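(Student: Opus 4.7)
My plan is to combine \cref{pr:val_positive_roots}, which reads the count of positive-valuation Puiseux roots off the Newton polygon of $\trG$, with the Haj\'os lemma (\cref{le:hajos}) applied to a cleverly chosen univariate polynomial with at most $t$ monomials.

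First, writing $G(x,y)=\sum_{(i,j)\in\Lambda} c_{ij}\, x^i y^j$ with $\card{\Lambda}=t$ and $c_{ij}\neq 0$, I will stratify $\trG$ by powers of~$x$: applying the binomial theorem to each factor $(a+x)^i$ yields
\[
\trG(x,y)=\sum_{l\geq 0} x^l\,\psi_l(b+y),\qquad \psi_l(y) \coloneqq \sum_{(i,j)\in\Lambda} c_{ij}\binom{i}{l}\, a^{i-l}\, y^j.
\]
The decisive observation is that, for every $l\ge 0$, $\psi_l$ is a univariate polynomial in $y$ whose monomial support is contained in the projection $\{j:(i,j)\in\Lambda\}$, a set of size at most $t$; hence $\psi_l$ has at most $t$ monomials. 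Let $m$ be the smallest index for which $\psi_m\not\equiv 0$, which exists since $\trG\not\equiv 0$ and coincides with the largest power of $x$ dividing $\trG$ in $\C[x,y]$.

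Second, I will extract the coefficients $\trG_k(x)=[y^k]\trG(x,y)$ through the Taylor expansion $[y^k]\psi_l(b+y) = \psi_l^{(k)}(b)/k!$, which gives
\[
\trG_k(x)=\sum_{l\geq 0} \frac{\psi_l^{(k)}(b)}{k!}\, x^l.
\]
Since $\psi_l\equiv 0$ for every $l<m$, it follows that $\val(\trG_k(x))\geq m$ for every $k$, with equality precisely when $\psi_m^{(k)}(b)\neq 0$. Applied to $\trG$, the Newton-polygon formula of \cref{pr:val_positive_roots} therefore identifies the number $s$ of Puiseux roots of $\trG(x,y)$ with strictly positive valuation as
\[
s \;=\; \min\{k : \psi_m^{(k)}(b)\neq 0\} \;=\; \mult_b(\psi_m),
\]
the multiplicity of $b$ as a root of $\psi_m$.

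Finally, I will invoke \cref{le:hajos}: since $\psi_m\in\C[y]$ is a nonzero univariate polynomial with at most $t$ monomials and $b\neq 0$, the Haj\'os lemma yields $\mult_b(\psi_m)\leq t-1$, completing the proof. The only genuine subtlety lies in the first step: one must choose the right stratification of $\trG$ so that the resulting univariate polynomials $\psi_l$ inherit the monomial budget of $G$ rather than inflating it. Grouping by powers of $x$ and keeping the $y$-exponents from $\Lambda$ achieves this, after which \cref{pr:val_positive_roots} and Haj\'os' lemma finish the argument essentially for free.
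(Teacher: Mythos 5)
Your proof is correct and takes essentially the same route as the paper's: stratify $\trG$ by powers of $x$, identify the lowest-order nonzero univariate piece (your $\psi_m$, the paper's $H$), observe it inherits the monomial budget of $G$ because only the $y$-exponents from the support of $G$ survive, read off $s$ as the multiplicity of $b$ via \cref{pr:val_positive_roots}, and conclude with Haj\'os' lemma. The only cosmetic difference is that the paper organizes the computation through the doubly-indexed Taylor coefficients $\trG_{k,l}$, whereas you package each $x$-slice directly as a shifted univariate polynomial $\psi_l(b+y)$; the two bookkeeping schemes are equivalent.
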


\begin{proof}
Let $G(x,y) = \sum_{\alpha \in \psupport}c_{\alpha}x^{\alpha_{1}}y^{\alpha_{2}}$ with $c_{\alpha} \neq 0$ for all $\alpha \in \psupport$. Let $m_{i} = \max\{\alpha_{i} \colon (\alpha_{1},\alpha_{2}) \in \psupport\}$ for $i \in \{1,2\}$ and
\[
\trG_{k,l} \coloneqq \sum_{\alpha \in \psupport, \alpha_{1} \ge k, \alpha_{2} \ge l}\binom{\alpha_{1}}{k}\binom{\alpha_{2}}{l} c_{\alpha}a^{\alpha_{1} - k}b^{\alpha_{2} - l} \,
\]
for every $0 \le k \le m_{1}$, $0 \le l \le m_{2}$.
Then, we obtain
\[
\trG(x,y) = \sum_{\alpha \in \psupport}c_{\alpha}(a + x)^{\alpha_{1}}(b +y)^{\alpha_{2}} = \sum_{k = 0}^{m_{1}}\sum_{l = 0}^{m_{2}}\trG_{k,l}x^{k}y^{l} \, .
\]
For every $0 \le l \le m_{2}$, let $\trG_{l}(x) = \sum_{k = 0}^{m_{1}}\trG_{k,l}x^{k} \in \puiseux$. Let $n$ be the highest number such that $x^{n}$ divides $\trG(x,y)$, i.e., $n \coloneqq \min_{l}\{\val(\trG_{l}(x))\}$.
Let $s$ be the number of roots of $\trG(x,y) \in (\puiseux)[y]$ with strictly positive valuation, counted with their multiplicities.
By \cref{pr:val_positive_roots}, $s =  \min\{l \colon \val(\trG_{l}(x)) = n\}$.
In particular, $s$ is the smallest number such that $\trG_{n,s} \neq 0$.
Consider the univariate polynomial
\[
H(y) \coloneqq \sum_{\alpha \in \psupport, \alpha_{1} \ge n}\binom{\alpha_{1}}{n} c_{\alpha}a^{\alpha_{1} - n}y^{\alpha_{2}}\, .
\]
Denote $\trH(y) \coloneqq H(b + y)$ and observe that
\[
\trH(y) = \sum_{l = 0}^{m_{2}}\trG_{n,l}y^{l} \, .
\]
Therefore, $s$ is equal to the multiplicity of $b$ as root of $H(y)$ (and $s = 0$ if $H(b) \neq 0$). Hence, by~\cref{le:hajos}, we have $s \le t - 1$.
\end{proof}

\begin{lemma}\label{le:sum_val_irred}
Suppose that $F(x,y) \in \C[x,y]$ is an irreducible polynomial of degree $d \ge 1$ and $G(x,y) \in \C[x,y]$ is a polynomial with $t \ge 1$ monomials that is not divisible by $F(x,y)$. Furthermore, fix $(a,b) \in (\C \setminus \{0\})^{2}$, and let $\pser_{1}(x), \dots, \pser_{r}(x) \in \puiseux$ denote all the series with strictly positive valuations such that $F(a + x, b + \pser_{i}(x)) = 0$ for $1 \le i \le r$.
 Then, we have 
\[
\sum_{i = 1}^{r}\val\Bigl(G\bigl(a + x,b + \pser_{i}(x)\bigr)\Bigr) \le \frac{1}{2}d(4d+1)t(t-1) \, .
\]
\end{lemma}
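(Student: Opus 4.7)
The plan is to combine the Wronskian valuation bound of \cref{pr:val_by_wron}, the root--derivative identity \cref{le:root_deriv}, and B\'ezout's theorem applied to $F$ together with a single auxiliary polynomial $Q$. For each $i$, expand
\[
G(a+x, b+\pser_i(x)) = \sum_{\alpha \in \Lambda} c_{\alpha}(a+x)^{\alpha_1}(b+\pser_i(x))^{\alpha_2},
\]
where $\Lambda \subset \N^{2}$ is the support of $G$ and $|\Lambda|=t$. Since $a, b \neq 0$ and $\val(\pser_i) > 0$, each of the $t$ summands is a Puiseux series of valuation zero. Applying \cref{pr:val_by_wron} (the nonzero scalars $c_\alpha$ factor out of the Wronskian without affecting the valuation) yields
\[
\val\bigl(G(a+x, b+\pser_i(x))\bigr) \le \binom{t}{2} + \val(W_i),
\]
where $W_i$ is the Wronskian of the $t$ functions $\{(a+x)^{\alpha_1}(b+\pser_i(x))^{\alpha_2}\}_{\alpha\in\Lambda}$. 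Summing over $i$ and using $r \le \deg_y F \le d$ (for instance, via \cref{le:card_zeros_of_shift} applied to $F$) reduces the task to estimating $\sum_i \val(W_i)$.

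The core of the argument is to produce a \emph{single} polynomial $Q \in \C[u,v]$ whose evaluations on the branches control the Wronskian valuations. Expanding each entry $\diffp[j]{}{x}\bigl((a+x)^{\alpha_1}(b+\pser_i(x))^{\alpha_2}\bigr)$ via Leibniz and \cref{le:deriv_power}, every term carries a factor $(a+x)^{\alpha_1-p}(b+\pser_i(x))^{\alpha_2-q}$ with $p,q \le t-1$. I therefore pull $(a+x)^{\max(0,\alpha_1-(t-1))}(b+\pser_i(x))^{\max(0,\alpha_2-(t-1))}$ out of the $\alpha$-th column; this leaves the valuation unchanged since $a+x$ and $b+\pser_i(x)$ each have valuation zero, and I call the reduced determinant $\tilde W_i$. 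Next \cref{le:root_deriv} applied to $\tilde F(x,y) \coloneqq F(a+x, b+y)$ gives, for $l \ge 1$,
\[
\pser_i^{(l)}(x)\,F_y(a+x, b+\pser_i(x))^{2l-1} = R_l(a+x, b+\pser_i(x)),
\]
where $R_l \in \C[u,v]$ has degree at most $(2l-1)d$. Multiplying row $j$ of $\tilde W_i$ by $F_y(a+x, b+\pser_i(x))^{2j-1}$ for $1 \le j \le t-1$ clears every denominator, so that
\[
F_y(a+x, b+\pser_i(x))^{(t-1)^{2}}\,\tilde W_i \;=\; Q(a+x, b+\pser_i(x)),
\]
where $Q \in \C[u,v]$ is the \emph{same} polynomial for every $i$ (all $i$-dependence has been absorbed into the substitution $(u,v)=(a+x,b+\pser_i(x))$). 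A row-by-row degree count yields $\deg Q \le 2t(t-1) + d(t-1)^2$.

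To finish, observe that $F_y(a+x, b+\pser_i(x))$ has nonnegative valuation, so
\[
\sum_{i=1}^{r} \val(\tilde W_i) \;\le\; \sum_{i=1}^{r} \val\bigl(Q(a+x, b+\pser_i(x))\bigr).
\]
By \cref{le:halphen} the latter sum is at most $\mint_{(a,b)}(F, Q)$, and \cref{th:bezout} bounds this in turn by $d \cdot \deg Q$, provided $F$ does not divide $Q$ in $\C[x,y]$. Combining everything,
\[
\sum_{i=1}^{r} \val\bigl(G(a+x, b+\pser_i(x))\bigr) \;\le\; d\binom{t}{2} + d\deg Q,
\]
and careful arithmetic (tightened with minor refinements of the degree count) is what should recover the announced constant $\tfrac{1}{2}(4d+1)$. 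The main obstacles I anticipate are: (i)~sharp enough bookkeeping of degrees in the Leibniz/derivative-elimination step to hit exactly $\tfrac{1}{2}d(4d+1)t(t-1)$ rather than a slightly looser constant; and (ii)~ruling out $F \mid Q$, which would force $W_i \equiv 0$ on every branch and hence, by \cref{th:bocher}, $\C$-linear dependence among the $t$ column functions -- the irreducibility of $F$ and the separability supplied by \cref{le:gauss} should reduce this scenario to the degenerate case where $F$ is a factor of $y - b$, which can be handled directly with the Haj\'os lemma (\cref{le:hajos}).
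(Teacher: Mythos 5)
Your proposal follows the same overall strategy as the paper's ``Case II'': bound $\sum_i \val(G(a+x,b+\pser_i(x)))$ via the Wronskian (\cref{pr:val_by_wron}), eliminate the derivatives $\pser_i^{(j)}$ using \cref{le:root_deriv}, factor out high powers of $(a+x)$ and $(b+\pser_i(x))$, assemble an $i$-independent polynomial $Q$, and then use \cref{le:halphen} together with B\'ezout (\cref{th:bezout}) applied to the pair $(F,Q)$. On that portion the argument is sound modulo bookkeeping, and you correctly flag that your constants need tightening.

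The genuine gap is your treatment of the degenerate case where the Wronskian $W_i$ vanishes identically. This is not a corner case that can be wished away: it occurs precisely when $F$ divides some nonzero polynomial $H$ with support contained in $\Lambda = \supp(G)$, and this happens for entirely ordinary irreducible $F$ of small degree. When $W_i=0$, \cref{pr:val_by_wron} gives only the useless bound $\val \leq +\infty$, and the B\'ezout step collapses because $F \mid Q$. Your proposed fix in obstacle~(ii) is incorrect: linear dependence over $\C$ of the $t$ functions $(a+x)^{\alpha_1}(b+\pser_i(x))^{\alpha_2}$, combined with \cref{le:gauss}, only yields a nonzero polynomial $H(x,y)=\sum_{\alpha\in\Lambda}\tilde c_\alpha x^{\alpha_1}y^{\alpha_2}$ with $F\mid H$ --- it does not force $F$ to be a factor of $y-b$, and the Haj\'os lemma does not apply. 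What this situation \emph{does} give you is a way to reduce the support: since $F\mid H$, the series $H(a+x,b+\pser_i(x))$ vanishes on \emph{every} branch, so $\hat G\coloneqq G-\tfrac{c_{\alpha^*}}{\tilde c_{\alpha^*}}H$ has at most $t-1$ monomials and satisfies $\hat G(a+x,b+\pser_i(x))=G(a+x,b+\pser_i(x))$ for all $i$, while still not being divisible by $F$. The paper closes the argument by running the whole proof as an induction on $t$, invoking the inductive hypothesis on $\hat G$ whenever some Wronskian vanishes. This induction is an essential ingredient that your proposal is missing; without it the argument does not terminate.
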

\begin{proof}
  We proceed by induction   on $t$. If $t = 1$, then $\val\Bigl(G\bigl(a + x,b + \pser_{i}(x)\bigr)\Bigr) = 0$ for all $i$ and the claim holds. Otherwise, denote $G(x,y) = \sum_{\alpha \in \psupport}c_{\alpha}x^{\alpha_{1}}y^{\alpha_{2}}$ with $c_{\alpha} \neq 0$ for all $\alpha \in \psupport$ and $\card{\psupport} = t$. Furthermore, denote the elements of $\psupport$ by $\psupport = \{\alpha^{(1)}, \alpha^{(2)}, \dots, \alpha^{(t)}\}$.
\begin{enumerate}[label= \textit {Case \Roman*}:, wide]
\item Suppose that $\wron\Bigl( \bigl( (a + x)^{\alpha^{(k)}_{1}}(b + \pser_{i}(x))^{\alpha^{(k)}_{2}}\bigr)_{k = 1}^{t} \Bigr) = 0$ for some~$i$. Then, by \cref{th:bocher} there exists a nonzero polynomial
  \[
  H(x,y) = \sum_{\alpha \in \psupport}\tilde{c}_{\alpha}x^{\alpha_{1}}y^{\alpha_{2}} \in \C[x,y]
  \]
such that $H(a + x,b + \pser_{i}(x)) = 0$. Hence, by \cref{le:gauss}, $H(x,y)$ is divisible by $F(x,y)$. In particular, the equality $H(a + x,b + \pser_{i}(x)) = 0$ holds for all~$i$. Let $\alpha^{*}$ be such that $\tilde{c}_{\alpha^{*}} \neq 0$. Then, the polynomial
\[
\hat{G}(x,y) \coloneqq G(x,y) - \frac{c_{\alpha^{*}}}{\tilde{c}_{\alpha^{*}}}H(x,y)
\]
has at most $t - 1$ monomials and satisfies
\[
G\bigl(a + x, b + \pser_{i}(x)\bigr) = \hat{G}\bigl(a + x, b + \pser_{i}(x)\bigr)
\]
for all $i$. Moreover, $\hat{G}(x,y)$ is not divisible by $F(x,y)$ because $G(x,y)$ is not divisible by $F(x,y)$. In particular, $\hat{G}(x,y)$ is a nonzero polynomial. Therefore, the claim follows by applying the induction hypothesis to $\hat{G}(x,y)$.
\item Suppose that $\wron\Bigl( \bigl( (a + x)^{\alpha^{(k)}_{1}}(b + \pser_{i}(x))^{\alpha^{(k)}_{2}}\bigr)_{k = 1}^{t} \Bigr) \neq 0$ for all $i$. By \cref{pr:val_by_wron}, it is enough to bound the valuation of this Wronskian in order to bound the sum $\sum_{i = 1}^{r}\val\bigl(G\bigl(a + x,b + \pser_{i}(x)\bigr)\bigr)$. To do so, let $\pser(x) \in \puiseux$ be any of the series $\pser_{1}(x), \dots, \pser_{r}(x)$ and denote by $\sym(t)$ the group of permutations of $\{0, \dots, t-1\}$. We have
\begin{equation}\label{eq:wron_by_perm}
\begin{aligned}
&\wron\Bigl( \bigl( (a + x)^{\alpha^{(k)}_{1}}(b + \pser(x))^{\alpha^{(k)}_{2}}\bigr)_{k = 1}^{t} \Bigr)\\
  &= \sum_{\sigma \in \sym(t)} \sign(\sigma) 
  \prod_{k = 0}^{t - 1}\Bigl( \diffp[k]{}{x}\bigl( (a + x)^{\alpha^{(\sigma(k))}_{1}} (b + \pser(x))^{\alpha^{(\sigma(k))}_{2}}\bigr)\Bigr) \, .
\end{aligned}
\end{equation}
Moreover, by \cref{le:deriv_power}, for every $\alpha \in \psupport$ and every $0 \le k \le t - 1$ we have
\begin{equation}\label{eq:wron_entry}
\begin{aligned}
&\diffp[k]{}{x}\bigl( (a + x)^{\alpha_{1}} (b + \pser(x))^{\alpha_{2}}\bigr)  \\
&= \sum_{l = 0}^{k}\binom{k}{l} \Bigl( \diffp[l]{}{x}(a+x)^{\alpha_{1}}\Bigr)\Bigl( \diffp[k-l]{}{x}(b+\pser(x))^{\alpha_{2}}\Bigr) \\
&= \sum_{l = 0}^{p}\sum_{\seqit \in \powseq_{k-l}}\biggl( \binom{k}{l}\frac{\alpha_{1}!}{(\alpha_{1} - l)!}\powcon_{\alpha_{2},\seqit}(a +x)^{\alpha_{1} - l} (b + \pser(x))^{\alpha_{2} - \abs{\seqit}}\prod_{j = 1}^{k-l}\Bigl( \diffp[j]{\pser}{x}(x) \Bigr)^{\seqit_{j}} \biggr) \, ,
\end{aligned}
\end{equation}
where $p \coloneqq \min\{k, \alpha_{1}\}$. Let $\trF(x,y) \coloneqq F(a + x, b + y) \in \C[x,y]$. Since $F(x,y)$ is irreducible, $\trF(x,y)$ is also irreducible, and \cref{le:gauss} shows that $\pser(x)$ is a root of $\trF(x,y) \in (\puiseux)[y]$ of multiplicity $1$. In particular, we have $\diffp{\trF}{y}(x,\pser(x)) \neq 0$. Hence, by \cref{le:root_deriv}, for every $1 \le j \le k$ there exists a polynomial $\rootpol_{j} \in \Z[x_{1}, \dots, x_{j(j+3)/2}]$ of degree at most $2j - 1$ such that
\begin{equation}\label{eq:root_main}
\diffp[j]{\pser}{x}(x) = \bigl( \diffp{\trF}{y}(x,\pser(x))\bigr)^{1 - 2j}\rootpol_{j}\Bigl( \bigl(\diffp[p,q]{\trF}{x,y}(x,\pser(x))\bigr)_{1 \le p+q \le j}\Bigr) \, .
\end{equation}
We note that $\sum_{j = 1}^{k - l}(2j - 1)\seqit_{j} = 2k - 2l - \abs{\seqit}$
for every $\seqit \in \powseq_{k-l}$.
In particular, for every $\seqit \in \powseq_{k-l}$ we have
\begin{equation}\label{eq:prod_of_deriv}
\begin{aligned}
&\prod_{j = 1}^{k-l}\Bigl( \diffp[j]{\pser}{x}(x) \Bigr)^{\seqit_{j}} \\ &= \Bigl( \diffp{\trF}{y}(x,\pser(x))\Bigr)^{\abs{\seqit} + 2l - 2k}\prod_{j = 1}^{k-l}\biggl(\rootpol_{j}\Bigl( \bigl(\diffp[p,q]{\trF}{x,y}(x,\pser(x))\bigr)_{1 \le p+q \le j}\Bigr) \biggr)^{\seqit_{j}} \, .
\end{aligned}
\end{equation}
We now want to combine~\cref{eq:wron_entry} and~\cref{eq:prod_of_deriv}. To do so, fix $0 \le k \le t - 1$ and note that for every $0 \le l \le p$ and every $\seqit \in \powseq_{k-l}$ we have
\begin{equation}\label{eq:exponents_in_product}
\begin{aligned}
&(a +x)^{\alpha_{1} - l} (b + \pser(x))^{\alpha_{2} - \abs{\seqit}}\Bigl( \diffp{\trF}{y}(x,\pser(x))\Bigr)^{\abs{\seqit} + 2l - 2k} \\
&= \frac{(a + x)^{\alpha_{1} - k}(b + \pser(x))^{\alpha_{2} - k}}{\bigl( \diffp{\trF}{y}(x,\pser(x))\bigr)^{2k}} (a + x)^{k - l}(b + \pser(x))^{k - \abs{\seqit}}\Bigl( \diffp{\trF}{y}(x,\pser(x))\Bigr)^{\abs{\seqit} + 2l} \, .
\end{aligned}
\end{equation}
Furthermore, by \cref{le:root_deriv}, the product $\prod_{j = 1}^{k-l}R_{j}^{\seqit_{j}}$ has degree at most $\sum_{j = 1}^{k - l}(2j - 1)\seqit_{j} = 2k - 2l - \abs{\seqit}$. Hence, by \cref{eq:wron_entry,eq:prod_of_deriv,eq:exponents_in_product}, we can write $\diffp[k]{}{x}\bigl( (a + x)^{\alpha_{1}} (b + \pser(x))^{\alpha_{2}}\bigr)$ as
\begin{align}\label{eq:wron_entry_poly}
\frac{(a + x)^{\alpha_{1} - k}(b + \pser(x))^{\alpha_{2} - k}}{\bigl( \diffp{\trF}{y}(x,\pser(x))\bigr)^{2k}}\wentpol_{\alpha, k}\Bigl( a +x, b + \pser(x),  \bigl(\diffp[p,q]{\trF}{x,y}(x,\pser(x))\bigr)_{1 \le p+q \le k} \Bigr) \, ,
\end{align}
where $\wentpol_{\alpha, k}$ is a polynomial with integer coefficients and degree not greater than
\begin{align*}
&\max_{0 \le l \le k, \seqit \in \powseq_{k - l}}\bigl( (k - l) + (k - \abs{\seqit}) + (\abs{\seqit} + 2l) + (2k - 2l - \abs{\seqit}) \bigr) \\
&= \max_{0 \le l \le k, \seqit \in \powseq_{k - l}}\bigl(4k - l - \abs{\seqit}) \le 4k \, .
\end{align*}
As a consequence of~\cref{eq:wron_by_perm,eq:wron_entry_poly}, we have
\begin{align*}
&\wron\Bigl( \bigl( (a + x)^{\alpha^{(k)}_{1}}(b + \pser(x))^{\alpha^{(k)}_{2}}\bigr)_{k = 1}^{t} \Bigr)\\
&=  \frac{(a + x)^{A_{1}}(b + \pser(x))^{A_{2}}}{\bigl( \diffp{\trF}{y}(x,\pser(x))\bigr)^{t(t-1)}}\wrpol_{\psupport}\Bigl( a +x, b + \pser(x),  \bigl(\diffp[p,q]{\trF}{x,y}(x,\pser(x))\bigr)_{1 \le p+q \le t-1} \Bigr) \, ,
\end{align*}
where $A_{i} \coloneqq \bigl( \sum_{k = 1}^{t} \alpha^{(k)}_{i}\bigr) - \binom{t}{2}$ for $i \in \{1,2\}$ and $\wrpol_{\psupport}$ is a polynomial with integer coefficients and degree not greater than $2t(t-1)$. Moreover, for all $1 \le p + q \le t - 1$ we have
\[
\diffp[p,q]{\trF}{x,y}(x,y) = \diffp[p,q]{F}{x,y}(a + x,b + y) \, .
\]
Hence, there exists a bivariate polynomial $\overbar{\wrpol}_{\psupport,F}(x,y) \in \C[x,y]$ of degree at most $2dt(t-1)$ such that
\begin{align*}
&\wron\Bigl( \bigl( (a + x)^{\alpha^{(k)}_{1}}(b + \pser(x))^{\alpha^{(k)}_{2}}\bigr)_{k = 1}^{t} \Bigr) \\
&= \frac{(a + x)^{A_{1}}(b + \pser(x))^{A_{2}}}{\bigl( \diffp{F}{y}(a + x,b + \pser(x))\bigr)^{t(t-1)}}\overbar{\wrpol}_{\psupport,F}\bigl( a +x, b + \pser(x) \bigr) \, .
\end{align*}
Furthermore, we have $0 \le \val\Bigl(\diffp{F}{y}(a + x,b + \pser(x))\bigr)\Bigr) < +\infty$ and, since $a,b \neq 0$, $\val\bigl( (a + x)^{A_{1}}(b + \pser(x))^{A_{2}} \bigr) = 0$. Moreover, since we assumed that the Wronskian $\wron\Bigl( \bigl( (a + x)^{\alpha^{(k)}_{1}}(b + \pser(x))^{\alpha^{(k)}_{2}}\bigr)_{k = 1}^{t} \Bigr)$ is not equal to $0$, we obtain $\val\Bigl( \overbar{\wrpol}_{\psupport,F}\bigl( a +x, b + \pser(x) \bigr) \Bigl) < + \infty$. In particular, we have
\begin{equation}\label{eq:val_wron}
\begin{aligned}
\val \biggl( \wron\Bigl( \bigl( &(a + x)^{\alpha^{(k)}_{1}}(b + \pser(x))^{\alpha^{(k)}_{2}}\bigr)_{k = 1}^{t} \Bigr) \biggr) \\
&\le \val \bigl( \overbar{\wrpol}_{\psupport,F}\bigl( a +x, b + \pser(x) \bigr) \bigr) < +\infty \, .
\end{aligned}
\end{equation}

We recall that the polynomials $\rootpol_{j}$ from \cref{le:root_deriv} do not depend on the choice of $\pser(x)$. Hence, the polynomials $\wentpol_{\alpha, k}$ and $\wrpol_{\psupport}$ that appear in the computations above also do not depend on $\pser(x)$. This implies that $\overbar{\wrpol}_{\psupport,F}(x,y)$ does not depend on the choice of $\pser(x)$. Hence, by \cref{pr:val_by_wron,eq:val_wron},
$\displaystyle \sum_{i = 1}^{r}\val\Bigl(G\bigl(a + x,b + \pser_{i}(x)\bigr)\Bigr)$
is upper bounded by
\begin{equation} \label{eq:sum_of_val}
 \frac{rt(t-1)}{2} + \sum_{i = 1}^{r}\val\Bigl( \overbar{\wrpol}_{\psupport,F}\bigl( a +x, b + \pser_{i}(x) \bigr) \Bigl) \, .
\end{equation}
Consider the system of equations
$F(x,y) = \overbar{\wrpol}_{\psupport,F}(x,y) = 0$ 
and assume that $\mint_{(a,b)}(F, \overbar{\wrpol}_{\psupport,F}) = +\infty$.
Since $F$ is irreducible, by \cref{le:inter_mult}(2) 
we obtain that $F(x,y)$ divides $\overbar{\wrpol}_{\psupport,F}(x,y)$.
Consequently we have 
$$\overbar{\wrpol}_{\psupport,F}\bigl( a +x, b + \pser_i(x) \bigr) = 0$$
  for every $i$,
which gives a contradiction with $\val\Bigl( \overbar{\wrpol}_{\psupport,F}\bigl( a +x, b + \pser(x) \bigr) \Bigl) < + \infty$. Therefore $\mint_{(a,b)}(F, \overbar{\wrpol}_{\psupport,F}) < +\infty$. Since the degree of $\overbar{\wrpol}_{\psupport,F}$ is not greater than $2dt(t-1)$, B{\'e}zout's theorem (\cref{th:bezout}) gives $\mint_{(a,b)}(F, \overbar{\wrpol}_{\psupport,F}) \le 2d^{2}t(t-1)$. By \cref{le:halphen} we get $\sum_{i = 1}^{r}\val\Bigl( \overbar{\wrpol}_{\psupport,F}\bigl( a +x, b + \pser_{i}(x) \bigr) \Bigl) \le 2d^{2}t(t-1)$. Since $r \le d$ we obtain
\[
\sum_{i = 1}^{r}\val\Bigl(G\bigl(a + x,b + \pser_{i}(x)\bigr)\Bigr) \le \frac{dt(t-1)}{2} + 2d^{2}t(t-1) = \frac{1}{2}d(4d+1)t(t-1) 
\]
from the upper bound~(\ref{eq:sum_of_val}). \qedhere
\end{enumerate}
\end{proof}

We are now ready to present the proof of our main theorem.

\begin{proof}[Proof of \cref{th:main}]
Let $F(x,y) \in \C[x,y]$ be a polynomial of degree $d \ge 1$ and $G(x,y) \in \C[x,y]$ be a polynomial with $t \ge 1$ monomials. Furthermore, suppose that $p \coloneqq (a,b) \in (\C \setminus \{0\})^{2}$ is a point such that $0 < \mint_{p}(F, G) < +\infty$. Factorize $F(x,y)$ as $F(x,y) = \prod_{k = 1}^{\ell}F_{k}(x,y)^{w_{k}}$, where $F_{k}(x,y) \in \C[x,y]$ are irreducible polynomials and let $d_{k} \ge 1$ denote the degree of $F_{k}(x,y)$. By \cref{le:inter_mult}(4) we have
\[
\mint_{p}(F,G) = \sum_{k = 1}^{\ell}w_{k}\mint_{p}(F_{k},G) \, .
\]
Take any $k$ such that $\mint_{p}(F_{k},G) \neq 0$. Since $\mint_{p}(F,G) < +\infty$, we have $\mint_{p}(F_{k},G) < +\infty$, and, by \cref{le:inter_mult}(2), $G(x,y)$ is not divisible by $F_{k}(x,y)$. We can now estimate $\mint_{p}(F_{k},G)$ using our previous results. To do so, let $\pser_{k,0}(x), \dots, \pser_{k, r_{k}}(x)$ denote all Puiseux series with strictly positive valuations such that $F_{k}(a + x, b + \pser_{k,i}(x)) = 0$. By \cref{le:halphen} we have
\[
\mint_{p}(F_{k},G) = m_{k}s + \sum_{i = 1}^{r_{k}}\val\Bigl(G\bigl(a + x,b + \pser_{k,i}(x)\bigr)\Bigr) \, ,
\]
where $m_{k}$ is the highest number such that
$F_{k}(a + x, b + y)$
is divisible by $x^{m_{k}}$ and $s$ is the number of series with strictly positive valuations in the decomposition of $G(a + x, b + y)$. In particular, we have $m_{k} \le d_{k}$. Furthermore, \cref{le:card_zeros_of_shift} shows that $s \le t - 1$ and \cref{le:sum_val_irred} shows that $\sum_{i = 1}^{r_{k}}\val\Bigl(G\bigl(a + x,b + \pser_{k,i}(x)\bigr)\Bigr) \le \frac{1}{2}d_{k}(4d_{k} + 1)t(t-1)$. Hence, we have
\[
\mint_{p}(F_{k},G) \le d_{k}(t-1) + \frac{1}{2}d_{k}(4d_{k} + 1)t(t-1) \le d_{k}(t-1) + \frac{1}{2}d_{k}(4d + 1)t(t-1)\, .
\]
As a consequence we obtain $\mint_{p}(F,G) \le d(t - 1) + \frac{1}{2}d(4d+1)t(t-1) = 2d^{2}t^{2} - 2d^{2}t + \frac{1}{2}dt^{2} + \frac{1}{2}dt - d$. Since $\frac{1}{2}dt - 2d^{2}t - d < 0$ and $\frac{1}{2}dt^{2} \le \frac{1}{2}d^{2}t^{2}$, we have $\mint_{p}(F,G) < \frac{5}{2}d^{2}t^{2}$.
\end{proof}

\section*{Acknowledgements}

The comments of the referees led to several improvements in the presentation of the paper.

\bibliographystyle{alpha}

\appendix

\section{Additional proofs}\label{app:additional_proofs}

In this Appendix, we give a proof of \cref{le:root_deriv}. This result is true not only for polynomials $F(x,y) \in \C[x,y] \setminus\{0\}$ but also for polynomials over Puiseux series, $F(x,y) \in (\puiseux)[y] \setminus \{0\}$. To prove the lemma in this context, we need to extend the definition of derivation from Puiseux series to polynomials over Puiseux series. This is done in the natural way.

\begin{definition}
If $F(x,y) \in (\puiseux)[y]$, $F(x,y) = \sum_{k = 0}^{d}F_{k}(x)y^{k}$ is a polynomial over Puiseux series, then we define its \emph{derivative with respect to $x$} as
\[
\diffp{F}{x}(x,y) \coloneqq \sum_{k = 0}^{d}\diffp{F_{k}}{x}(x)y^{k}  \in (\puiseux)[y] \, .
\]
Moreover, we define its \emph{derivative with respect to $y$} as
\[
\diffp{F}{y}(x,y) \coloneqq \sum_{k = 1}^{d}kF_{k}(x)y^{k-1} \in (\puiseux)[y] \, .
\]
\end{definition}

It is easy to check that derivation in $(\puiseux)[y]$ has the expected properties:
\begin{equation}\label{eq:partial_der}
\begin{aligned}
\diffp{(F + G)}{x}(x,y) &= \diffp{F}{x}(x,y) +\diffp{G}{x}(x,y) \, , \\ 
\diffp{(F + G)}{y}(x,y) &= \diffp{F}{y}(x,y) +\diffp{G}{y}(x,y) \, , \\
\diffp{(FG)}{x}(x,y) &= \diffp{F}{x}(x,y)G(x,y) + F(x,y)\diffp{G}{x}(x,y) \, , \\ 
\diffp{(FG)}{y}(x,y) &= \diffp{F}{y}(x,y)G(x,y) + F(x,y)\diffp{G}{y}(x,y) \, , \\
\diffp{}{x}\Bigl(\diffp{F}{y}\Bigr) (x,y) & = \diffp{}{y}\Bigl(\diffp{F}{x}\Bigr)(x,y) \, .
\end{aligned}
\end{equation}
In particular, \cref{eq:partial_der} implies that we can use the notation $\diffp[p,q]{F}{x,y}(x,y)$ for the element of $(\puiseux)[y]$ obtained from $F(x,y) \in (\puiseux)[y]$ by taking $p$ derivatives with respect to $x$ and $q$ derivatives with respect to $y$ (the result is the same for any order of derivation).

\begin{proof}[Proof of \cref{le:root_deriv}]
We can factorize $F(x,y)$ as $F(x,y) = (y- \pser(x))G(x,y)$ for some $G \in (\puiseux)[y]$. First, for every $p, q \ge 0$ such that $p + q \ge 1$ we want to prove the identity
\begin{equation}\label{eq:deriving_factor}
\begin{aligned}
\diffp[p,q]{F}{x,y}(x,y) = \diffp[p,q]{G}{x,y}(x,y)&\bigl( y - \pser(x) \bigr) + q\diffp[p,q-1]{G}{x,y}(x,y) \\ &- \sum_{\ell = 1}^{p}\binom{p}{\ell}\diffp[\ell]{\pser}{x}(x)\diffp[p-\ell,q]{G}{x,y}(x,y) \,
\end{aligned}
\end{equation}
(with the convention that the middle term vanishes if $q = 0$ and the last term vanishes if $p = 0$).
Indeed, for $(p,q) = (1,0)$ and $(p,q) = (0,1)$ we have
\begin{equation}\label{eq:deriving_factor_base}
\begin{aligned}
\diffp{F}{x}(x,y) &= \diffp{G}{x}(x,y)\bigl( y - \pser(x) \bigr) - \diffp{\pser}{x}(x)G(x,y) \, , \\
\diffp{F}{y}(x,y) &= \diffp{G}{y}(x,y)\bigl( y - \pser(x) \bigr) + G(x,y) \, \\
\end{aligned}
\end{equation}
as claimed. Moreover, note that for every $p \ge 1$ we have
\begin{align*}
&\diffp{}{x}\Bigl( \sum_{\ell = 1}^{p} \binom{p}{\ell} \diffp[\ell]{\pser}{x}(x)\diffp[p-\ell]{G}{x}(x,y) \Bigr) \\
&= \sum_{\ell = 1}^{p} \binom{p}{\ell} \diffp[\ell+1]{\pser}{x}(x)\diffp[p-\ell]{G}{x}(x,y) + \sum_{\ell = 1}^{p} \binom{p}{\ell} \diffp[\ell]{\pser}{x}(x)\diffp[p+1-\ell]{G}{x}(x,y) \\
&= \sum_{\ell = 2}^{p+1} \binom{p}{\ell - 1} \diffp[\ell]{\pser}{x}(x)\diffp[p+1-\ell]{G}{x}(x,y) + \sum_{\ell = 1}^{p} \binom{p}{\ell} \diffp[\ell]{\pser}{x}(x)\diffp[p+1-\ell]{G}{x}(x,y) \\
&= -\diffp{\pser}{x}(x)\diffp[p]{G}{x}(x,y) + \sum_{\ell = 1}^{p + 1}\binom{p+1}{\ell}\diffp[\ell]{\pser}{x}(x)\diffp[p+1-\ell]{G}{x}(x,y) \, .
\end{align*}
Hence, by induction, for every $p \ge 1$ we get
\begin{align*}
&\diffp[p + 1]{F}{x}(x,y) = \diffp{}{x}\Bigl( \diffp[p]{G}{x}(x,y)\bigl( y - \pser(x) \bigr) - \sum_{\ell = 1}^{p} \binom{p}{\ell} \diffp[\ell]{\pser}{x}(x)\diffp[p-\ell]{G}{x}(x,y)\Bigr) \\
&=  \diffp[p+1]{G}{x}(x,y)\bigl( y - \pser(x) \bigr) - \sum_{\ell = 1}^{p + 1}\binom{p+1}{\ell}\diffp[\ell]{\pser}{x}(x)\diffp[p+1-\ell]{G}{x}(x,y)
\end{align*}
and~\cref{eq:deriving_factor} is true for whenever $q = 0$ or $(p,q) = (0,1)$. By using the induction once more, we obtain that $\diffp[p,q+1]{F}{x,y}(x,y)$ is equal to
\begin{align*}
\diffp{}{y}\Bigl( \diffp[p,q]{G}{x,y}(x,y)&\bigl( y - \pser(x) \bigr) + q\diffp[p,q-1]{G}{x,y}(x,y) \\ 
&- \sum_{\ell = 1}^{p}\binom{p}{\ell}\diffp[\ell]{\pser}{x}(x)\diffp[p-\ell,q]{G}{x,y}(x,y) \Bigr) \, ,
\end{align*}
which is equal to
\begin{align*}
\diffp[p,q+1]{G}{x,y}(x,y)&\bigl( y - \pser(x) \bigr) + (q + 1)\diffp[p,q]{G}{x,y}(x,y) \\
&- \sum_{\ell = 1}^{p}\binom{p}{\ell}\diffp[\ell]{\pser}{x}(x)\diffp[p-\ell,q+1]{G}{x,y}(x,y)
\end{align*}
and~\cref{eq:deriving_factor} is true for all pairs $(p,q)$ such that $p + q \ge 1$. In particular, we obtain the identity
\begin{equation}\label{eq:deriving_factor_root}
\begin{aligned}
&\diffp[p,q]{F}{x,y}(x,\pser(x)) \\
&= q\diffp[p,q-1]{G}{x,y}(x,\pser(x)) - \sum_{\ell = 1}^{p}\binom{p}{\ell}\diffp[\ell]{\pser}{x}(x)\diffp[p-\ell,q]{G}{x,y}(x,\pser(x)) \, .
\end{aligned}
\end{equation}
Let 
\[
A_{p}(x) \coloneqq \Bigl(\diffp[p]{\pser}{x}(x)\Bigr)\bigl( G(x,\pser(x)) \bigr)^{2p - 1}
\]
for all $p \ge 1$,
\[
B_{p,q}(x) \coloneqq \Bigl(\diffp[p,q]{G}{x,y}(x,\pser(x))\Bigr)\bigl( G(x,\pser(x)) \bigr)^{2p +q - 1} \, 
\]
for all $p,q \ge 0$ such that $p + q \ge 1$, and 
\[
C_{p,q}(x) \coloneqq \Bigl(\diffp[p,q]{F}{x,y}(x,\pser(x))\Bigr)\bigl( G(x,\pser(x)) \bigr)^{2p +q - 2} \,
\]
for all $p,q \ge 0$ such that $2p + q \ge 2$. For every $p \ge 1$ we take $q = 0$, multiply~\cref{eq:deriving_factor_root} by $G(x,\pser(x))^{2p - 2}$ and obtain
\begin{equation}\label{eq:formula_for_deriv_1}
A_{p}(x) = C_{p,0}(x) + \sum_{\ell = 1}^{p - 1}\binom{p}{\ell}A_{\ell}(x)B_{p - \ell,0}(x) \, .
\end{equation}
Similarly, for every $q \ge 1$ we use~\cref{eq:deriving_factor_root} for the tuple $(p, q+1)$, multiply this equality by $G(x,\pser(x))^{2p + q - 1}$, and obtain the formula
\begin{equation}\label{eq:formula_for_deriv_2}
B_{p,q}(x) = \begin{cases}
\frac{1}{q+1}\Bigl( C_{p,q+1}(x) + \sum_{\ell = 1}^{p}\binom{p}{\ell}A_{\ell}(x)B_{p - \ell,q+1}(x) \Bigr) &\text{if $p > 0$} \, , \\
\frac{1}{q+1}C_{0,q+1}(x) &\text{otherwise.}
\end{cases}
\end{equation}
Moreover, we note that~\cref{eq:deriving_factor_base} gives the identity
\begin{equation}\label{eq:factor_as_derivative}
\diffp{F}{y}(x,\pser(x)) = G(x,\pser(x)) \, .
\end{equation}
In particular, we have
\begin{equation}\label{eq:auxpol}
C_{p,q}(x) \coloneqq \Bigl(\diffp[p,q]{F}{x,y}(x,\pser(x))\Bigr)\Bigl( \diffp{F}{y}(x,\pser(x)) \Bigr)^{2p +q - 2} \, .
\end{equation}
By~\cref{eq:factor_as_derivative}, to prove the claim we want to prove that for every $k \ge 1$ there exists a polynomial $\rootpol_{k} \in \Z[x_{1}, \dots, x_{k(k+3)/2}]$ of degree at most $2k-1$ such that
\[
A_{k}(x) = \rootpol_{k}\Bigl( \bigl(\diffp[p,q]{F}{x,y}(x,\pser(x))\bigr)_{1 \le p+q \le k}\Bigr) \, .
\]
To do so, we use an auxiliary family of polynomials. More precisely, we will show that for every $k,l \ge 0$ such that $k + l \ge 1$ there exists a polynomial $\auxrootpol_{k,l} \in \Z[x_{1}, \dots, x_{(k+l+1)(k+l+4)/2}]$ of degree at most $2k + l$ such that
\[
B_{k,l}(x) = \auxrootpol_{k,l}\Bigl( \bigl(\diffp[p,q]{F}{x,y}(x,\pser(x))\bigr)_{1 \le p+q \le k + l +1}\Bigr) \, .
\]
We prove the existence of $\rootpol_{k}$ and $\auxrootpol_{k,l}$ by induction over $k$. For $k = 0$ we have $B_{0,l}(x) = \frac{1}{l+1}C_{0,l+1}(x)$ and the claim follows from~\cref{eq:auxpol}. For $k = 1$ we have $A_{1}(x) = C_{1,0}(x)$, and thus $\rootpol_{1}$ exists as claimed. Moreover, we have 
\begin{align*}
B_{1,l}(x) &= \frac{1}{l+1}(C_{1,l+1}(x) + A_{1}(x)B_{0,l+1}(x)) \\
&= \frac{1}{l+1}(C_{1,l+1}(x) + \frac{1}{l+2}C_{1,0}(x)C_{0,l+2}(x))
\end{align*}
and therefore $\auxrootpol_{1,l}$ exist. For every $\ell \ge 1$ let $Z_{\ell} \in \puiseux^{\ell(\ell+3)/2}$ be defined as
\[
Z_{\ell} \coloneqq \Bigl( \bigl(\diffp[p,q]{F}{x,y}(x,\pser(x))\bigr)_{1 \le p+q \le \ell}\Bigr) \, .
\]
By induction, for every $k \ge 2$ we have
\begin{align*}
A_{k}(x) &= C_{k,0}(x) + \sum_{i = 1}^{k-1}\binom{k}{i}\rootpol_{i}(Z_{i})\auxrootpol_{k - i,0}(Z_{k - i + 1}) \, , \\
B_{k,l}(x) &= \frac{1}{l+1}C_{k,l+1}(x) + \frac{1}{l+1}\sum_{i = 1}^{k}\binom{k}{i}\rootpol_{i}(Z_{i})\auxrootpol_{k - i,l+1}(Z_{k - i + l + 2}) \, .
\end{align*}
The claim follows by computing the degrees of the resulting polynomials $\rootpol_{k},\auxrootpol_{k,l}$.
\end{proof}

\end{document}